\documentclass[reqno,12pt]{amsart} 
\usepackage{mathrsfs}
\usepackage[margin=1in]{geometry}
\usepackage{amssymb,amsmath,amsfonts}
\usepackage{latexsym}
\usepackage{amssymb}
\usepackage{amsmath}
\usepackage{amsfonts}
\usepackage{amscd}
\usepackage{texdraw}
\usepackage{color}
\usepackage{cite}
\usepackage[centertags]{amsmath}
\usepackage[colorlinks=true,urlcolor=blue,linkcolor=red,citecolor=red]{hyperref}
\usepackage[pagewise]{lineno}
%\linenumbers

% Fuzz -------------------------------------------------------------------
\hfuzz2pt % Don't bother to report over-full boxes if over-edge is < 2pt
% Line spacing -----------------------------------------------------------
% \newlength{\defbaselineskip}
% \setlength{\defbaselineskip}{\baselineskip}
% \newcommand{\setlinespacing}[1]%
%            {\setlength{\baselineskip}{#1 \defbaselineskip}}
% \newcommand{\doublespacing}{\setlength{\baselineskip}%
%                            {2.0 \defbaselineskip}}
% \newcommand{\singlespacing}{\setlength{\baselineskip}{\defbaselineskip}}

% THEOREMS ---------------------------------------------------------------

\theoremstyle{plain}
\newtheorem{definition}{Definition}[section]
\newtheorem{theorem}[definition]{Theorem}
\newtheorem{lemma}[definition]{Lemma}
\newtheorem{proposition}[definition]{Proposition}
\newtheorem{corollary}[definition]{Corollary}
\newtheorem{remark}[definition]{Remark}
\newtheorem{example}[definition]{Example}
%%% ----------------------------------------------------------------------

\allowdisplaybreaks
\begin{document}
\title[Inverse-closedness of Schur and BGS type algebras]{Inverse-closedness of weighted Schur and BGS type quasi-Banach algebras}

\author[P. A. Dabhi]{Prakash A. Dabhi}
\address{Department of Basic Sciences, Institute of Infrastructure Technology Research and Management (IITRAM), Maninagar (East), Ahmedabad -380026, Gujarat, India}
\email{lightatinfinite@gmail.com, prakashdabhi@iitram.ac.in}

\author[K. B. Solanki]{Karishman B. Solanki}
\address{Department of Mathematics, Indian Institute of Technology Ropar, Rupnagar, Punjab -140001, India}
\email{karishsolanki002@gmail.com, staff.karishman.solanki@iitrpr.ac.in}
%\thanks{Corresponding author: K. B. Solanki}

\subjclass[2020]{Primary: 47A56, 43A15, 46A16, Secondary: 47L30, 46H35, 47C10}

\keywords{inverse-closedness, weight, $p$-norm, matrix algebra, operator valued matrix}

\date{}

\dedicatory{}

\commby{}

%%% ----------------------------------------------------------------------
\begin{abstract}
We prove that the weighted quasi-Banach algebras of operator valued matrices satisfying Schur and Baskakov-Gohberg-Sj\"ostrand (BGS) conditions are inverse-closed in the Banach algebra $B(\ell^2(X,\mathcal{H}))$ whenever the weight is admissible, where $\mathcal{H}$ is a Hilbert space and $X$ is a relatively separated subset of $\mathbb{R}^d$. Furthermore, we identify the Gel'fand space of weighted infinite variable group algebra $\ell^p_\omega(\mathbb{Z^N})$ for $0<p\leq1$, and establish inverse-closedness of infinite variable analogue of BGS-type algebra in $B(\ell^2(\mathbb{Z^N},\mathcal{H}))$.
\end{abstract}

%%% ----------------------------------------------------------------------
\maketitle
%\tableofcontents
%%% ----------------------------------------------------------------------

%---------------------------------------------------------------------------
\section{Introduction}
Let $\mathcal{A}$ be a subalgebra of an algebra $\mathcal{B}$ both having same unit. Then $\mathcal{A}$ is \textit{inverse-closed} in $\mathcal{B}$ if $a\in\mathcal{A}$ is invertible in $\mathcal{B}$ implies $a$ is invertible in $\mathcal{A}$. The notion of inverse-closedness traces back to the classical result of Wiener \cite{wi} which states that if $C(\mathbb{T})$ is the collection of all complex valued continuous functions on the unit circle $\mathbb{T}$, $A(\mathbb{T})$ is the collection of all such functions having absolutely convergent Fourier series, and if $f\in A(\mathbb{T})$ is nowhere vanishing, then $\frac{1}{f}\in A(\mathbb{T})$. This result is well known as Wiener's theorem or Wiener's lemma in the literature and can be stated as: ``$A(\mathbb{T})$ is inverse-closed in $C(\mathbb{T})$". Its weighted analogues were first studied by Domar in \cite{do} for non-quasianalytic weights and by Bhatt and Dedania in \cite{bh} for more general weights. For $0<p<1$, $p$-power analogue of Wiener's theorem is obtained by \.Zelazko in \cite{ze} and its weighted analogues are obtained in \cite{De}. In \cite{Da}, the $p$-power analogues for $1<p<\infty$ are studied by taking some conditions on weight. In particular, it follows that if $0<p<\infty$, $\omega$ is an admissible weight on $\mathbb{Z}$, which is a $p$-algebra weight when $p>1$, and if $A^p_\omega(\mathbb{T})$ is the algebra of all $f\in C(\mathbb{T})$ satisfying $\sum_{n\in\mathbb{Z}} |\widehat{f}(n)|^p\omega(n)^p<\infty$, then $A^p_\omega(\mathbb{T})$ is inverse-closed in $C(\mathbb{T})$. If $\mathcal{A}$ is inverse-closed in $\mathcal{B}$, then $\mathcal{A}\subset\mathcal{B}$ is called a \textit{Wiener pair} as it goes back to the work of Wiener. Bochner and Phillips were the first to investigate the vector-valued analogues of Wiener's theorem in \cite{bo}, and more general weighted results are obtained in \cite{kb,kbmul}. We refer the reader to \cite{BaCaHe,BaCaHe2,GroLei,kbwp,kbmul,Jaf,Ku1,KB,Sjo1,Sjo,Su2,Su1} and references therein for other generalization of Wiener's theorem and its various applications in many fields of analysis; of course the list is not exhaustive. 

A $p$-Banach $\ast$-algebra $\mathcal{A}$ is \emph{symmetric} if $\sigma(aa^\ast)\subset [0,\infty)$ for all $a\in\mathcal{A}$ or equivalently $\sigma(a)\subset\mathbb{R}$ for all $a=a^\ast\in\mathcal{A}$. Symmetric algebras plays a crucial role in the theory of Banach algebras because they share several properties of $C^\ast$-algebras. Observe that symmetry gives information about a given algebra while inverse-closedness gives information about two nested algebras but the interesting fact is that these two topics are closely related to such an extent that most of the time the symmetry of a $p$-Banach $\ast$-algebra $\mathcal{A}$ is shown using inverse-closedness of $\mathcal{A}$ in some $C^\ast$-algebra and one of the important technical result to do so is the  Hulanicki's lemma (see Theorem \ref{thm:Hul}). For more details on relation between Hulanicki's lemma and symmetric Banach $\ast$-algebras refer to \cite{Ba,Fol,Fe,GroLei,Rim,Sh,Shi} and references therein, again the list is not exhaustive. Hulanicki's lemma for quasi algebras has been obtained in \cite{kbhul} along with an application deriving the Barnes' lemma and Wiener's theorem for twisted convolution.

Motivated by the above works and particularly the works in \cite{KB, Ba, Ba1, Kri}, we discuss the inverse-closedness of weighted Schur and Baskakov-Gohberg-Sj\"ostrand (BGS) type quasi-normed algebras in $B(\ell^2(X,\mathcal{H}))$, where $X$ is a relatively separated subset of $\mathbb{R}^d$ and $\mathcal{H}$ is a Hilbert space. 

Let $0< p<\infty$, $\omega$ be a weight on $\mathbb{R}^d$, and let $X\subset \mathbb R^d$ be a \textit{relatively separated} set, that is,
$$\sup_{n\in \mathbb{Z}^d}|X\cap (n+[0,1]^d)|<\infty.$$
Note that relatively separated sets are countable. Let $\mathcal{H}$ be a Hilbert space, and let $B(\mathcal{H})$ be the $C^\ast$-algebra of all bounded linear operators on $\mathcal{H}$ equipped with the operator norm. 

Define the \textit{weighted Schur-type algebra} over $X$, denoted by $\mathcal{S}^p_\omega=\mathcal{S}_\omega^p(X)$, as the collection of all matrices $A=[A_{k,l}]_{k,l\in X}$, with each entry $A_{k,l}\in B(\mathcal H)$, that satisfies the following condition:
\begin{enumerate}
    \item If $0<p\leq1$, then 
    \begin{align} \label{def:S^p_nu norm p<1}
        \|A\|_{\mathcal S_\omega^p} = \max\left\{\sup\limits_{k\in X}\sum\limits_{l\in X}\|A_{k,l}\|^p\omega(k-l)^p,\sup\limits_{l\in X}\sum\limits_{k\in X}\|A_{k,l}\|^p\omega(k-l)^p\right\} <\infty.
    \end{align}

    \item If $1<p<\infty$, then
    \begin{align} \label{def:S^p_nu norm p>1}
        \|A\|_{\mathcal S_\omega^p} = \max\left\{\sup\limits_{k\in X}\left(\sum\limits_{l\in X}\|A_{k,l}\|^p\omega(k-l)^p\right)^{\frac{1}{p}},\sup\limits_{l\in X}\left(\sum\limits_{k\in X}\|A_{k,l}\|^p\omega(k-l)^p\right)^{\frac{1}{p}}\right\}<\infty.
    \end{align}
\end{enumerate}
And the \textit{weighted BGS-type algebra}, denoted by $\mathcal{C}^p_\omega=\mathcal{C}^p_\omega(\mathbb{Z}^d)$, is defined as collection of all $B(\mathcal{H})$-valued matrices $A=[A_{k,l}]_{k,l\in\mathbb{Z}^d}$ satisfying
\begin{align} \label{eq:Cvnorm}
    |A| = \sum_{l\in\mathbb{Z}^d} \left(\sup_{k\in\mathbb{Z}^d} \|A_{k,k-l}\|\right)^p \omega(l)^p<\infty.
\end{align}

A weight on $\mathbb{R}^d$ is said to satisfy the \textit{weak growth condition} if
\begin{align} \label{def:weakgrowthweight}
\omega(x)\geq C (1+|x|)^\delta \quad \text{for some constants} \quad C>0 \quad \text{and} \quad 0<\delta\leq1.    
\end{align}
In \cite{KB}, K\"ohldorfer and Balazs have shown that $\mathcal{S}^1_\omega(X)$ and $\mathcal{C}^1_\omega(\mathbb{Z}^d)$ are symmetric, provided the weight $\omega$ is admissible that further satisfies the weak growth condition for Schur-type algebra. Here we extend these results in the quasi-norm setting, that is, the case of $p<1$. Moreover, the case of $p>1$ is also treated for BGS-type algebra with additional conditions on the weight.

The following are the main theorems.

\begin{theorem} \label{thm:main Schur}
Let $0<p\leq1$, $\omega$ be an admissible weight satisfying the weak growth condition, and let $X\subset \mathbb R^d$ be a relatively separated set. Then the algebra $\mathcal{S}^p_\omega(X)$ is inverse-closed in $B(\ell^2(X,\mathcal{H}))$. In particular, $\mathcal{S}^p_{\omega}$ is symmetric.
\end{theorem}

\begin{theorem} \label{thm:main Cv}
Let $0<p<\infty$, and let $\omega$ be an admissible weight on $\mathbb{Z}^d$ such that for $p>1$, $\omega$ satisfies $\omega^{-p'}\star\omega^{-p'}\leq\omega^{-p'}$ and $\sum_{n\in\mathbb{Z}^d} \omega(n)^{-p'}<\infty$. Then the algebra $\mathcal{C}^p_\omega$ is inverse-closed in $B(\ell^2(\mathbb{Z}^d,\mathcal{H}))$. In particular, $\mathcal{C}^p_\omega$ is symmetric.
\end{theorem}
Here, $p'$ denotes the conjugate index of $p>1$, that is, $\frac{1}{p}+\frac{1}{p'}=1$, and $\star$ denotes the convolution. We shall use Hulanicki's lemma for $p$-Banach algebra (Theorem \ref{thm:Hul}) and multivariate $p$-power weighted analogue of Wiener's theorem (Theorem \ref{thm:Wiener}) for Schur-type and BGS-type algebras, respectively. 

Furthermore, for $0<p\leq1$ and a weight $\omega$ on $\mathbb{Z^N}$ (the set of finitely supported sequences of integers), we identify the Gel'fand space of $\ell^p_\omega(\mathbb{Z^N},\mathbb{C})$ with a subset of $\mathbb{C}^\infty$, the countable product of the complex plane. It is claimed in \cite{AT} that it can be identified with a countable product of closed annuli; however, this is not correct in general, and we provide an example showing that this result fails in certain cases. Moreover, we extend the inverse-closedness of BGS-type quasi algebras to the infinite variable setting. The identification of Gel'fand space is of independent interest, and one of the reasons is that it provides a simple proof of Wiener's theorem. In fact, the Wiener theorem used here and its different analogues in the literature essentially depend on it. This further clarifies the connection between the present results and the structure of Gel'fand space. 

The paper is organized as follows. In Section \ref{sec:pre}, required notations, definitions, and auxiliary results used in paper are provided. Sections \ref{sec:Schur} and \ref{sec:BGS} are devoted to the inverse-closedness of Schur-type and BGS-type algebra, respectively. In Section \ref{sec:Gelfand space}, we present the result for the Gel'fand space of $\ell^p_\omega(\mathbb{Z^N},\mathbb{C})$ and show inverse-closedness of infinite variable BGS-type algebra.

\section{Preliminaries} \label{sec:pre}
Throughout the paper, $\mathbb{N}$ is the set of positive integers, $\mathbb{Z}$ is the set of integers, $\mathbb{R}$ is the set of real numbers, $\mathbb{C}$ is the set of complex numbers, $\mathbb{T}=\{z\in\mathbb{C}:|z|=1\}$ is the unit circle in $\mathbb{C}$, and for $d\in\mathbb{N}$, $\mathbb X^d$ is $d$-copies of the space $\mathbb X$, where $\mathbb X$ is one of the above set. Moreover, $\mathcal{H}$ denotes a Hilbert space and $B(\mathcal{H})$ denotes the $C^\ast$-algebra of bounded linear operators on $\mathcal{H}$ with the operator norm. 

We state an important inequality that will be used in this paper.
\begin{align} \label{eq:ineqp<1}
    \text{If } 0<p\leq1 \text{ and } a,b\geq0, \text{ then } (a+b)^p\leq a^p +b^p.
\end{align}

For $0<p\leq1$ and an algebra $\mathcal{A}$, a mapping $\|\cdot\| : \mathcal{A} \to [0,\infty)$ is a \emph{$p$-norm} \cite{ze} on $\mathcal{A}$ if the following conditions hold for all $x,y\in \mathcal{A}$ and $\alpha \in \mathbb{C}$.
\begin{enumerate}
\item $\|x\|=0$ if and only if $x=0$;
\item $\|x+y\|\leq\|x\|+\|y\|$;
\item $\|\alpha x\|= |\alpha|^p \|x\|$;
\item $\|xy\|\leq\|x\|\|y\|$.
\end{enumerate}
The algebra $\mathcal{A}$ along with a $p$-norm $\|\cdot\|$ is a \emph{$p$-normed algebra}. If $\mathcal{A}$ is complete in the $p$-norm, then $(\mathcal{A},\|\cdot\|)$ is a \emph{$p$-Banach algebra}.
   
\begin{remark}
When $p=1$, the map $\|\cdot\|$ is a \emph{norm} on $\mathcal{A}$, and $(\mathcal{A},\|\cdot\|)$ is a \emph{normed algebra} or a \emph{Banach algebra} if it is complete. When $0<p<1$, the $p$-norm, $p$-normed algebra and $p$-Banach algebra are also referred to as \emph{quasi-norm}, \emph{quasi-normed algebra} or \emph{quasi-Banach algebra}, respectively. But we will stick to the $p$-nomenclature in the paper as it also clarifies the norm in study. 
\end{remark}

A $p$-normed ($p$-Banach) $\ast$-algebra is a $p$-normed ($p$-Banach) algebra along with an isometric involution $\ast$. A \emph{$p$-$C^\ast$-algebra} is a $C^\ast$-algebra $(\mathcal A,\|\cdot\|)$ with the \emph{$p$-$C^\ast$-norm} $|x|=\|x\|^p\;(x \in \mathcal A)$. Let $\mathcal{A}$ be a $p$-Banach algebra with unit $e$, and let $x\in\mathcal{A}$. The set $\sigma_\mathcal{A}(x)=\{\lambda\in\mathbb{C}: \lambda e-x \ \text{is not invertible in}\ \mathcal{A} \}$ is the \emph{spectrum} of $x$ in $\mathcal{A}$ and the number $r_\mathcal{A}(x)=\sup\{|\lambda|^p:\lambda\in\sigma_\mathcal{A}(x)\}$ is the \emph{spectral radius} of $x$. The spectral radius formula gives $r_\mathcal{A}(x)=\lim_{n\to\infty}\|x^n\|^\frac{1}{n}$. We shall write just $\sigma(x)$ and $r(x)$ when the algebra in consideration is clear. Refer to \cite{bd,ze} for more details.

If $\mathcal{A}$ is a subalgebra of a $p$-Banach $\ast$-algebra $\mathcal{B}$ and both have same unit, then for $x\in\mathcal{A}$, it follows that $\sigma_\mathcal{B}(x) \subset \sigma_\mathcal{A}(x)$ and $r_\mathcal{B}(x) \leq r_\mathcal{A}(x)$. Note that $\mathcal{A}$ is inverse-closed in $\mathcal{B}$ if and only if $\sigma_\mathcal{B}(x) = \sigma_\mathcal{A}(x)$ and thus $r_\mathcal{B}(x) = r_\mathcal{A}(x)$ for all $x\in\mathcal{A}$ (cf. Theorem \ref{thm:Hul}). Due to this relation, inverse-closedness is also referred to as \textit{spectral-invariance}.

A \emph{weight} on $\mathbb{R}^d$ is a Borel measurable map $\omega:\mathbb{R}^d \to [0,\infty)$, and it is
\begin{enumerate}
    \item \textit{submultiplicative} if $\omega(x+y)\leq \omega(x)\omega(y)$ for all $x,y \in \mathbb{R}^d$.
    \item \textit{symmetric} if $\omega(x)=\omega(-x)$ for all $x\in\mathbb{R}^d$.
    \item \textit{satisfies the GRS-condition} if $\displaystyle \lim_{n\to\infty}\omega(nx)^\frac{1}{n}=1$ for all $x\in\mathbb{R}^d.$
    \item \textit{admissible} if it is submultiplicative, symmetric, and satisfies the GRS-condition.
\end{enumerate}  
Let $1<p<\infty$. A submultiplicative weight $\omega$ on $\mathbb{R}^d$ is a \textit{$p$-algebra weight} if it satisfies 
\begin{align} \label{eq:weightalgebra}
    \omega^{-p'}\star\omega^{-p'}\leq\omega^{-p'},
\end{align}
where $\star$ denotes convolution.

Given two weights $\omega$ and $\nu$, $\nu$ is \textit{$\omega$-moderate} if $\nu(x+y)\leq C\nu(x)\omega(y)$ for all $x,y \in \mathbb{R}^d$, where the constant $C$ is independent of $x$ and $y$. Note that if $\omega$ is a submultiplicative weight then it is $\omega$-moderate by definition, and if it is also symmetric, then $\omega\geq1$.  
Some examples of weights on $\mathbb{R}^d$ are 
\begin{enumerate}
	\item[(a)] polynomial type: $\omega_s(x)=(1+|x|)^s$ for $s\geq0$.
	\item[(b)] subexponential type: $\omega(x)=e^{a|x|^b}$ for $a>0, 0<b<1$.
	\item[(c)] mixed form: $\omega(x)=e^{a|x|^b}(1+|x|)^s\log(e+|x|)^t,$ where $a, s, t\geq0$, $0\leq b<1$.
\end{enumerate}
A particular polynomial weight $\omega_s=(1+|x|^2)^\frac{s}{2}$ for $s\geq0$ is of utmost importance in the fields of analysis and has been extensively studied in the literature, mainly with applications in fields of time-frequency analysis, differential equations, frame theory, etc. For $X\subset\mathbb{R}^d$, we consider the weight $\omega$ on $X$ as the restriction of a weight $\omega$ on $\mathbb{R}^d$.

% \begin{lemma}
% Let $X\subset \mathbb R^d$ be a relatively separated set.
% \begin{enumerate}
% \item[(a)] \emph{\cite[Lemma 1]{KG}} For any $s>d$, there exists a constant $C=C(s)>0$ such that $$\sup_{x\in \mathbb R^d}\sum_{k\in X} (1+|x-k|)^{-s}=C<\infty.$$
% \item[(b)] \emph{\cite[Lemma 2(a)]{KG}} For any $s>d$, there exists a constant $C=C(s)>0$ such that $$\sum_{n \in X}(1+|k-n|)^{-s}(1+|l-n|)^{-s}\leq C(1+|k-l|)^{-s}\quad (k, l \in X).$$
% \end{enumerate}
% \end{lemma}

Let $0<p<\infty$, $X$ be a relatively separated subset of $\mathbb{R}^d$, $\omega$ be a weight, and let $B$ be a Banach space. Let $$\ell^p_\omega(X,B)=\left\{x=(x_k)_{k\in X}:x_k \in B, \sum_{k\in X}\|x_k\|^p\omega(k)^p <\infty\right\}.$$
If $0<p<1$, then $\ell^p_\omega(X,B)$ is a $p$-Banach space with the $p$-norm 
$$\|x\|_{\ell^p_\omega(X,B)}=\sum_{k\in X}\|x_k\|^p\omega(k)^p \quad(x \in \ell^p_\omega(X,B))$$ 
and if $1\leq p<\infty$, then $\ell^p_\omega(X,B)$ is a Banach space with the norm 
$$\|x\|_{\ell^p_\omega(X,B)}=\left(\sum_{k\in X}\|x_k\|^p\omega(k)^p \right)^{\frac{1}{p}}\quad(x \in \ell^p_\omega(X,B)).$$ 
Of course, $\ell^\infty_\omega(X,B)$ is a Banach space with the norm $\|x\|_{\ell^\infty_\omega(X,B)} =\sup_{k\in X}\|x_k\|\omega(k)$ for all $x \in \ell^\infty_\omega(X,B)$. Note that the dual of $\ell^p_\omega(X,B)$ is isometrically isomorphic to $\ell^{p'}_{\frac{1}{\omega}}(X,B^\ast)$ for $1\leq p<\infty$, where $B^\ast$ is the dual of $B$. Also, the dual of $\ell^1_\omega(X,B)$ is isometrically isomorphic to $\ell^\infty_{\frac{1}{\omega}}(X,B^\ast)$.

Now, we turn to the space of operators. The spaces of special interest in this paper are the Hilbert space $\ell^2(X,\mathcal{H})$ and the $C^\ast$-algebra $B(\ell^2(X,\mathcal{H}))$.

If $A\in B(\ell^2(X,\mathcal{H}))$, then the \textit{canonical matrix form} of $A$, denoted by $\mathbb{M}(A)$, is given as the matrix $[A_{k,l}]_{k,l\in X}$ with each $A_{k,l}\in B(\mathcal{H})$ unique, and it operates on an element $x=(x_l)_{l\in X}$ in the following manner
\begin{align} \label{def:matrixoperates}
Ax = [A_{k,l}](x) = \left( \sum_{l\in X} A_{k,l} x_l \right)_{k\in X}\quad(x=(x_l)\in \ell^2(X,\mathcal H)).
\end{align}

Conversely, let $A_{k,l}\in B(\mathcal{H})$ for each $k,l\in X$, and let $A=[A_{k,l}]_{k,l\in X}$ be the $B(\mathcal{H})$-valued matrix such that for each $x=(x_l)_{l\in X}\in\ell^2(X,\mathcal{H})$, we have 
\begin{align} \label{eq:domaincondition}
\sum_{k\in X} \left\| \sum_{l\in X} A_{k,l} x_l \right\|^2 < \infty. 
\end{align}
Then $A:\ell^2(X,\mathcal{H})\to\ell^2(X,\mathcal{H})$ defined as above in \eqref{def:matrixoperates} is a bounded linear operator, that is, $A\in B(\ell^2(X,\mathcal{H}))$. So, there is a one-to-one correspondence between the operators $A\in B(\ell^2(X,\mathcal{H}))$ and $B(\mathcal{H})$-valued matrices over $X$ which satisfies \eqref{eq:domaincondition}.

Note that the composition of operators $A,B\in B(\ell^2(X,\mathcal{H}))$ is same as the matrix multiplication of $\mathbb{M}(A)$ and $\mathbb{M}(B)$, that is to say that $\mathbb{M}(AB)=\mathbb{M}(A)\mathbb{M}(B)$. The addition and scalar multiplication also follow the same rule. This implies that the algebra $B(\ell^2(X,\mathcal{H}))$ is isomorphic to the algebra of above mentioned $B(\mathcal{H})$-valued matrices. 

The (matrix) involution for $A=[A_{k,l}]_{k,l\in X}$ is then given by
\begin{align} \label{def:matrixinvolution}
A^\ast=[A^\ast_{k,l}]_{k,l\in X}^t,
\end{align}
where $A_{k,l}^\ast$ is the adjoint of $A_{k,l}\in B(\mathcal{H})$ for $k,l\in X$ and $M^t$ denotes the transpose of a matrix $M$. This establishes the relation with the involution between the above two algebras. We refer the reader to \cite{IJM} where these facts are first mentioned and to \cite{LK,LK1} for detailed proofs.

Lastly, we state some results which will be used to prove our results. First, we state Hulanicki's lemma for quasi algebras in the form we require. It was proved for Banach algebras in \cite{hul} by Hulanicki and for $p$-Banach algebras in \cite{kbhul}.

\begin{theorem} [Hulanicki’s lemma for quasi algebras] \label{thm:Hul}
Let $0<p\leq1$, and let $\mathcal{A}$ be a subalgebra of a symmetric $p$-Banach $\ast$-algebra $\mathcal{B}$ both having same unit. Then the following are equivalent
\begin{enumerate}
    \item $\mathcal{A}$ is inverse-closed in $\mathcal{B}$.
    \item $r_\mathcal{A}(A)=r_\mathcal{B}(A)$ for all $A=A^\ast\in\mathcal{A}$.
    \item $r_\mathcal{A}(A)\leq r_\mathcal{B}(A)$ for all $A=A^\ast\in\mathcal{A}$.
\end{enumerate}
In particular, if one of the above holds, then $\mathcal{A}$ is symmetric.
\end{theorem}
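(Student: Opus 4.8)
The plan is to prove the cycle $(1)\Rightarrow(2)\Rightarrow(3)\Rightarrow(1)$ and then read off the final assertion. The one structural fact I will lean on throughout is free of any hypothesis: since $\mathcal{A}$ and $\mathcal{B}$ share the unit, an element of $\mathcal{A}$ invertible in $\mathcal{A}$ is invertible in $\mathcal{B}$, so $\sigma_\mathcal{B}(x)\subseteq\sigma_\mathcal{A}(x)$ and hence $r_\mathcal{A}(x)\geq r_\mathcal{B}(x)$ for every $x\in\mathcal{A}$. With this in hand, $(1)\Rightarrow(2)$ is quick: inverse closedness makes $\lambda e-A$ invertible in $\mathcal{A}$ exactly when it is in $\mathcal{B}$, so $\sigma_\mathcal{A}(A)=\sigma_\mathcal{B}(A)$ for every $A\in\mathcal{A}$ and therefore $r_\mathcal{A}(A)=r_\mathcal{B}(A)$; and $(2)\Rightarrow(3)$ is immediate.

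The content lies in $(3)\Rightarrow(1)$. First I would reduce invertibility of a general $A\in\mathcal{A}$ that is invertible in $\mathcal{B}$ to the self-adjoint case. As $\mathcal{B}$ has an involution, $A^\ast$ is invertible in $\mathcal{B}$, so the self-adjoint elements $A^\ast A$ and $AA^\ast$ of $\mathcal{A}$ are invertible in $\mathcal{B}$; once each is shown invertible in $\mathcal{A}$, the products $(A^\ast A)^{-1}A^\ast$ and $A^\ast(AA^\ast)^{-1}$ furnish a left and a right inverse of $A$ inside $\mathcal{A}$, which must then coincide and give $A^{-1}\in\mathcal{A}$. Thus it suffices to treat a self-adjoint $H\in\mathcal{A}$ invertible in $\mathcal{B}$, and here I would pass to $H^2=H^\ast H$: symmetry of $\mathcal{B}$ forces $\sigma_\mathcal{B}(H^2)\subseteq[0,\infty)$, while invertibility gives $0\notin\sigma_\mathcal{B}(H^2)$, so by compactness $\sigma_\mathcal{B}(H^2)\subseteq[m,M]$ for some $0<m\leq M<\infty$.

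The decisive step is a rescaling. For any real $t$ with $0<t<2/M$, the self-adjoint element $Y=e-tH^2$ has $\sigma_\mathcal{B}(Y)=\{1-t\lambda:\lambda\in\sigma_\mathcal{B}(H^2)\}\subseteq(-1,1)$, so $r_\mathcal{B}(Y)<1$. Applying $(3)$ together with the free inequality to $Y$ gives $r_\mathcal{A}(Y)=r_\mathcal{B}(Y)<1$; since $0<p\leq1$ this means every $\lambda\in\sigma_\mathcal{A}(Y)$ satisfies $|\lambda|<1$, so $1\notin\sigma_\mathcal{A}(Y)$ and $e-Y=tH^2$ is invertible in $\mathcal{A}$, hence so is $H^2$. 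As $H$ commutes with $H^2$ it commutes with $(H^2)^{-1}$, and $(H^2)^{-1}H$ is a two-sided inverse of $H$ in $\mathcal{A}$, completing $(3)\Rightarrow(1)$. The closing clause then follows by applying $(1)\Rightarrow(2)$ to self-adjoint elements: inverse closedness yields $\sigma_\mathcal{A}(H)=\sigma_\mathcal{B}(H)\subseteq\mathbb{R}$ for every $H=H^\ast\in\mathcal{A}$, which is precisely symmetry of $\mathcal{A}$.

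I expect the main obstacle to be the bookkeeping in $(3)\Rightarrow(1)$ demanded by the $p$-th power in the spectral radius: one must check that $r_\mathcal{A}(Y)<1$ really excludes $1$ from $\sigma_\mathcal{A}(Y)$, which works exactly because $|\lambda|^p<1$ is equivalent to $|\lambda|<1$ when $0<p\leq1$, and that the symmetry of $\mathcal{B}$ is strong enough to seat $\sigma_\mathcal{B}(H^2)$ on the positive axis so that the window $(1-tM,1-tm)$ can be forced inside $(-1,1)$. The self-adjoint reduction also silently uses that $\mathcal{A}$ is closed under the involution, the standing assumption that makes the statements about $A=A^\ast\in\mathcal{A}$ meaningful.
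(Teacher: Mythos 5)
Your argument is correct, and it is the standard route to Hulanicki's lemma: the only nontrivial implication is $(3)\Rightarrow(1)$, which you handle by reducing to a self-adjoint $H$, using symmetry of $\mathcal{B}$ to place $\sigma_{\mathcal{B}}(H^2)$ in a compact interval $[m,M]\subset(0,\infty)$, and then rescaling so that $r_{\mathcal{B}}(e-tH^2)<1$ forces $tH^2=e-(e-tH^2)$ to be invertible in $\mathcal{A}$; the passage from $|\lambda|^p<1$ to $|\lambda|<1$ disposes of the $p$-norm convention. Note that the paper does not prove this statement itself --- it quotes it from \cite{hul} and \cite{kbhul} --- so there is no internal proof to compare against; your write-up is a correct self-contained substitute, modulo the (correctly flagged) standing assumption that $\mathcal{A}$ is a $\ast$-subalgebra containing the unit, and the standard facts that spectra in unital $p$-Banach algebras are nonempty compact sets obeying the polynomial spectral mapping theorem.
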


The second result of importance which we require is the multivariate (non-commutative) Banach algebra valued weighted analogue of Wiener's theorem. It traces back to the work of Bochner and Phillips in \cite{bo} which dealt with the non-weighted case. The one variable weighted case is treated in \cite{kb} for all $0<p<\infty$ and multivariate weighted result is obtained in \cite{kbmul} for $0<p\leq1$. Note that the results in \cite{kb,kbmul} deal with larger class of weights while here we require only results for admissible weights and so we state it in the form required. For $1<p<\infty$, the multivariate weighted case with admissible weight follows using same line of arguments.

\begin{theorem}[Vector-valued weighted Wiener's theorem] \label{thm:Wiener}
Let $0<p<\infty$, $\mathcal{A}$ be a unital Banach algebra and let $\omega$ be an admissible weight on $\mathbb{Z}^d$ that is a $p$-algebra weight satisfying $\sum_{n\in\mathbb{Z}^d} \omega(n)^{-p'}<\infty$ for $p>1$. If $f:\mathbb{T}^d\to\mathcal{A}$ is a continuous function such that $\widehat{f}\in\ell^p_\omega(\mathbb{Z}^d,\mathcal{A})$ and $f(t)$ is invertible in $\mathcal{A}$ for all $t\in\mathbb{T}^d$, then $f$ is invertible and $\widehat{f^{-1}}\in\ell^p_\omega(\mathbb{Z}^d,\mathcal{A})$. 
\end{theorem}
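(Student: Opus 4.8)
The plan is to recast the hypothesis as an inverse-closedness statement and then prove it by a Bochner--Phillips style localization. First I would equip $\mathcal{W}:=\ell^p_\nu(\mathbb{Z}^d,\mathcal{A})$ with convolution and verify it is a unital Banach algebra (a $p$-Banach algebra when $p\le 1$): for $0<p\le 1$ this is immediate from submultiplicativity of $\nu$ together with the inequality \eqref{eq:ineqp<1}, while for $1<p<\infty$ it is precisely the function of the $p$-algebra weight condition \eqref{eq:weightalgebra}, through a weighted Young/H\"older estimate. Passing to Fourier series identifies $\mathcal{W}$ with the algebra of $\mathcal{A}$-valued functions $f$ on $\mathbb{T}^d$ having $\widehat f\in\ell^p_\nu$, convolution corresponding to pointwise multiplication, and the inclusion $\mathcal{W}\hookrightarrow C(\mathbb{T}^d,\mathcal{A})$ is a continuous injective unital homomorphism. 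Since $f(t)$ being invertible for every $t$ is exactly invertibility of $f$ in $C(\mathbb{T}^d,\mathcal{A})$, the theorem is equivalent to the assertion that $\mathcal{W}$ is inverse closed in $C(\mathbb{T}^d,\mathcal{A})$.

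Next I would settle the scalar case $\mathcal{A}=\mathbb{C}$ on its own, as it provides the tools for the general case. Here the scalar version $A^p_\nu(\mathbb{T}^d)$ of $\mathcal{W}$ is commutative, and the GRS condition forces its (Gelfand, or the $p$-Banach analogue from \cite{ze}) character space to be exactly $\mathbb{T}^d$ with point evaluations; hence a nowhere-vanishing $\phi\in A^p_\nu(\mathbb{T}^d)$ has $1/\phi\in A^p_\nu(\mathbb{T}^d)$. This multivariate $p$-power weighted scalar Wiener lemma guarantees in particular that smooth bumps and smooth partitions of unity on $\mathbb{T}^d$ lie in $A^p_\nu(\mathbb{T}^d)$ and that scalar quotients by nonvanishing functions remain in the algebra.

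For the $\mathcal{A}$-valued case I would localize. Fixing $t_0\in\mathbb{T}^d$ and normalizing by the constant $c=f(t_0)^{-1}\in\mathcal{A}\subset\mathcal{W}$, the element $u:=cf$ satisfies $u(t_0)=e$, so $(u-e)(t_0)=0$. Choosing a scalar bump $\psi$ with $\psi\equiv 1$ near $t_0$ and support shrunk enough that $\|\psi(u-e)\|_{\mathcal{W}}<1$, the element $\tilde u:=e+\psi(u-e)$ agrees with $u$ near $t_0$, equals $e$ off the support of $\psi$, and is invertible in $\mathcal{W}$ by a Neumann series; consequently $g_{t_0}:=\tilde u^{-1}c\in\mathcal{W}$ coincides with $f^{-1}$ on a neighborhood $U_{t_0}$ of $t_0$. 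By compactness finitely many $U_{t_1},\dots,U_{t_N}$ cover $\mathbb{T}^d$, and choosing a smooth partition of unity $\{\chi_j\}\subset A^p_\nu(\mathbb{T}^d)$ subordinate to this cover, I set $g:=\sum_{j}\chi_j g_{t_j}\in\mathcal{W}$. Because each $\chi_j$ is scalar-valued, non-commutativity causes no trouble and one gets $g(t)f(t)=\sum_j\chi_j(t)\bigl(g_{t_j}(t)f(t)\bigr)=\sum_j\chi_j(t)e=e$ and likewise $f(t)g(t)=e$, so $f^{-1}=g\in\mathcal{W}$.

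The main obstacle is the localization estimate $\|\psi(u-e)\|_{\mathcal{W}}<1$: shrinking the support of $\psi$ does make $\psi(u-e)$ small in sup norm, since $(u-e)(t_0)=0$, but the $\mathcal{W}$-norm is the weighted $\ell^p$-norm of Fourier coefficients and is not a local quantity, so smallness is not automatic. I expect to control it by first approximating $f$ by an $\mathcal{A}$-valued trigonometric polynomial, which replaces $u-e$ by a smooth function satisfying $\|(u-e)(t)\|\lesssim|t-t_0|$ near $t_0$, so that the width of the bump governs the estimate; the GRS condition enters again to ensure that the relevant weighted norms of such localized smooth functions tend to $0$ as the support shrinks. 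This regularity (spectral-synthesis-type) step, carried out uniformly in the admissible weight, is the technical heart of the proof, and once it is in place the Neumann-series inversion and the partition-of-unity patching are routine.
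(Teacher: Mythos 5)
The paper itself does not prove Theorem \ref{thm:Wiener}; it is imported from \cite{bo,kb,kbmul}, so the comparison is with the standard proofs there. Your overall architecture --- identify $\mathcal W=\ell^p_\nu(\mathbb{Z}^d,\mathcal{A})$ with an algebra of $\mathcal{A}$-valued functions under pointwise multiplication, settle the scalar case by Gelfand/\.Zelazko theory plus the GRS condition, then handle non-commutativity by local inversion and partition-of-unity patching --- is the classical Bochner--Phillips scheme, and the patching step is written correctly. But the step you yourself isolate as the ``technical heart'' is not a technicality: it is false in the stated generality, and this is a genuine gap.

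Your claim that the scalar Wiener lemma ``guarantees in particular that smooth bumps and smooth partitions of unity on $\mathbb{T}^d$ lie in $A^p_\nu(\mathbb{T}^d)$'' conflates two independent properties of a Beurling algebra: inverse closedness (governed by GRS) and regularity (existence of elements with prescribed small support). For the subexponential weights $\nu(x)=e^{a|x|^b}$, $0<b<1$, which the paper lists as admissible, a $C^\infty$ bump has only superpolynomially decaying Fourier coefficients and in general does \emph{not} belong to $A^p_\nu$; one needs Gevrey-class bumps. Worse, there are admissible weights that are quasianalytic, e.g.\ $\nu(n)=e^{|n|/\log(e+|n|)}$, which is submultiplicative and satisfies GRS but fails Domar's non-quasianalyticity condition $\sum_n\log\nu(n)/(1+|n|^2)<\infty$; for such $\nu$ every $f$ with $\widehat f\in\ell^p_\nu$ is quasianalytic and cannot vanish on an open set without vanishing identically, so $A^p_\nu(\mathbb{T}^d)$ contains \emph{no} bump functions and no partitions of unity whatsoever. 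Even for polynomial weights your key estimate cannot be closed as proposed: after writing $u-e=P+R$ with $P$ a trigonometric polynomial, the error term is bounded by $\|\psi_\delta\|_{A^p_\nu}\|R\|_{\mathcal W}$, and $\|\psi_\delta\|_{A^p_\nu}\to\infty$ as the support shrinks for any unbounded weight, so smallness of $\|R\|_{\mathcal W}$ does not help. The standard repair, and the route taken in the cited sources, is to run the localization argument only in the \emph{unweighted} algebra $\ell^1(\mathbb{Z}^d,\mathcal{A})$ (where $C^\infty$ bumps are available and the Wiener--Ditkin estimates go through), and then to pass from $\ell^1$ to $\ell^p_\nu$ by a spectral-radius comparison $r_{\ell^p_\nu(\mathbb{Z}^d,\mathcal{A})}(f)=r_{\ell^1(\mathbb{Z}^d,\mathcal{A})}(f)$ of Brandenburg--Hulanicki type, where the GRS condition (and, for $p>1$, the $p$-algebra weight condition \eqref{eq:weightalgebra}) enters; this transfer needs no localization inside the weighted algebra and covers quasianalytic admissible weights. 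Without such a reduction, your argument proves the theorem only for the subclass of admissible weights for which $A^p_\nu(\mathbb{T}^d)$ is regular with norm-controlled bumps.
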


Here, $\widehat{f}$ denotes the Fourier transform of $f:\mathbb{T}^d\to\mathcal{A}$ given by
$$\widehat{f}(n)=\frac{1}{2\pi^2}\int_{[0,2\pi]^d}f(e^{ix})e^{-i(n\cdot x)}dx \quad (n\in\mathbb{Z}^d).$$

\section{Schur-type algebras}\label{sec:Schur}
Throughout this section $X$ is a relatively separated subset of $\mathbb{R}^d$ and $\mathcal{S}^p_\omega=\mathcal{S}^p_\omega(X)$. In order to employ the Hulanicki's lemma for quasi algebras, we need to show that $\mathcal{S}^p_\omega$ is indeed a $p$-Banach $\ast$-subalgebra of $B(\ell^2(X,\mathcal{H}))$. We first show that $\mathcal{S}^p_\omega$ is continuously embedded in $B(\ell^2(X,\mathcal{H}))$, and begin with the following lemma which gives the inclusion between Schur-type algebras.

\begin{lemma}
Let $0<p\leq1$, $p\leq q<\infty$, and let $\omega$ be a weight on $\mathbb{R}^d$. Then $\mathcal S_\omega^p\subset \mathcal S_\omega^q$ with 
\begin{align*}
    \|A\|_{\mathcal S_\omega^q}\leq\|A\|_{\mathcal S_\omega^p}^{\frac{q}{p}} \quad \text{for} \quad 0<p\leq q\leq 1, \quad
    \text{and} \quad \|A\|_{\mathcal S_\omega^q}\leq \|A\|_{\mathcal S_\omega^p}^{\frac{1}{p}} \quad \text{for} \quad q>1.
\end{align*}
\end{lemma}
\begin{proof}
The proof follows from the following observations. Let $0<p<q\leq 1$. Then $1\leq \frac{1}{q}<\frac{1}{p}$. This gives $\left(\sum_{k\in X}|x_k|^{\frac{1}{p}}\right)^p\leq \left(\sum_{k\in X}|x_k|^{\frac{1}{q}}\right)^q$ for every complex sequence $(x_k)$. Replacing $x_k$ by $|x_k|^{pq}$, we get 
$$\sum_{k\in X}|x_k|^q \leq \left(\sum_{k\in X}|x_k|^p\right)^{\frac{q}{p}}.$$ 
Taking $q=1$, we have 
$$\sum_{k\in X}|x_k| \leq \left(\sum_{k\in X}|x_k|^p\right)^{\frac{1}{p}}.$$ 
If $q>1$, then 
$$\left(\sum_{k\in X}|x_k|^q\right)^{\frac{1}{q}}\leq \sum_{k\in X}|x_k|\leq \left(\sum_{k\in X}|x_k|^p\right)^{\frac{1}{p}}.$$ 
\end{proof}

\begin{lemma}
Let $0<p<1$, $\omega$ be an admissible weight, and let $\nu$ be $\omega$-moderate weight. Then every $A\in \mathcal S_\omega^p$ defines a bounded operator on $\ell^q_\nu(X,\mathcal H)$ for all $q\geq p$ and $$\|A\|_{B(\ell^q_\nu(X,\mathcal H))}\leq C \|A\|_{B(\ell^p_\omega(X,\mathcal H))}^{\frac{1}{p}},$$ where $C$ is the constant arising from $\nu$ being $\omega$-moderate.
\end{lemma}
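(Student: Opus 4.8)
The plan is to establish boundedness on $\ell^q_m(X,\mathcal H)$ through a weighted, operator-valued Schur test, handling the ranges $p\le q\le 1$ and $q>1$ by different elementary tools, and then to convert the resulting estimates into powers of $\|A\|_{\mathcal S^p_\nu}$ by means of the preceding embedding lemma. Since $A\in\mathcal S^p_\nu$ exactly says that this Schur norm is finite, the most natural quantity on the right-hand side is $\|A\|_{\mathcal S^p_\nu}$; the membership $A\in B(\ell^p_\nu)$ together with $\|A\|_{B(\ell^p_\nu)}\le\|A\|_{\mathcal S^p_\nu}$ will itself come out as the instance $q=p$, $m=\nu$ of the estimate below. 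At the outset I would discard the operator-valued structure: for $x=(x_l)\in\ell^q_m(X,\mathcal H)$ the triangle inequality in $B(\mathcal H)$ gives $\|(Ax)_k\|=\|\sum_{l}A_{k,l}x_l\|\le\sum_{l}\|A_{k,l}\|\,\|x_l\|$, so every estimate reduces to a bound for the non-negative scalar kernel $B_{k,l}=\|A_{k,l}\|\,\nu(k-l)$ acting on scalar weighted sequences, the $\nu$-moderateness of $m$ being the device that inserts the factor $\nu(k-l)$.

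For $p\le q\le1$ I would argue directly. Expanding $\|Ax\|_{\ell^q_m}=\sum_{k}\|(Ax)_k\|^q m(k)^q$ and applying the subadditivity inequality \eqref{eq:ineqp<1} with exponent $q$ yields $\|Ax\|_{\ell^q_m}\le\sum_{k}\sum_{l}\|A_{k,l}\|^q\|x_l\|^q m(k)^q$. The key move is to transport the weight from $k$ to $l$: since $m$ is $\nu$-moderate, $m(k)=m(l+(k-l))\le C\,m(l)\,\nu(k-l)$, hence $m(k)^q\le C^q m(l)^q\nu(k-l)^q$. Interchanging the summations and taking the supremum over $l$ of the resulting column sums gives $\|Ax\|_{\ell^q_m}\le C^q\|A\|_{\mathcal S^q_\nu}\|x\|_{\ell^q_m}$; finiteness of the right-hand side shows in particular that $A$ is well defined and bounded on $\ell^q_m$ in the sense of \eqref{eq:domaincondition}. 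The embedding lemma $\|A\|_{\mathcal S^q_\nu}\le\|A\|_{\mathcal S^p_\nu}^{q/p}$ then expresses this as a power of $\|A\|_{\mathcal S^p_\nu}$.

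For $q>1$ the inequality \eqref{eq:ineqp<1} is unavailable, and this is where the substance of the argument lies. Setting $y_l=\|x_l\|\,m(l)$, so that $\|y\|_{\ell^q(X)}=\|x\|_{\ell^q_m(X,\mathcal H)}$, the moderateness estimate gives $\|(Ax)_k\|\,m(k)\le C\sum_{l}B_{k,l}\,y_l=C\,(By)_k$, whence $\|A\|_{B(\ell^q_m)}\le C\,\|B\|_{B(\ell^q(X))}$. I would then invoke the classical Schur test for the non-negative kernel $B$ on $\ell^q(X)$, which bounds $\|B\|_{B(\ell^q(X))}$ by the geometric mean $(\sup_{k}\sum_{l}B_{k,l})^{1/q'}(\sup_{l}\sum_{k}B_{k,l})^{1/q}$ of its row and column $\ell^1$-sums; this mean is dominated by $\max\{\sup_{k}\sum_{l}B_{k,l},\ \sup_{l}\sum_{k}B_{k,l}\}=\|A\|_{\mathcal S^1_\nu}$. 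The embedding lemma in the form $\|A\|_{\mathcal S^1_\nu}\le\|A\|_{\mathcal S^p_\nu}^{1/p}$ then produces the power $\tfrac1p$.

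The main obstacle is precisely the range $q>1$: the elementary subadditivity trick collapses, and one must pass to the honest, H\"older-based Schur test, whose hypotheses are the $\ell^1$ row and column sums, i.e.\ the $\mathcal S^1_\nu$-norm rather than the $\mathcal S^q_\nu$-norm, so the argument must route through $\mathcal S^1_\nu$ before the embedding lemma can be applied. A secondary point requiring care is the bookkeeping that splices the two ranges into a single estimate of the advertised form $C\,\|A\|_{\mathcal S^p_\nu}^{1/p}$: the exponents produced are $q/p$ for $p\le q\le1$ and $1/p$ for $q>1$, and reconciling them (together with tracking the exact power of the moderateness constant $C$) is where the final constant in the statement is pinned down.
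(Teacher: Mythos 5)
Your proposal is correct and follows essentially the same route as the paper: for $p\le q\le 1$ the paper uses exactly your combination of the subadditivity inequality, the $\nu$-moderateness estimate $m(k)\le C\,m(l)\nu(k-l)$, interchange of summation, and the embedding $\mathcal S^p_\nu\subset\mathcal S^q_\nu$; for $q>1$ the paper simply cites \cite[Lemma 4.4]{KB} for the bound $\|A\|_{B(\ell^q_m)}\le C\|A\|_{\mathcal S^1_\nu}$, which is the Schur-test step you spell out, followed by the same embedding $\|A\|_{\mathcal S^1_\nu}\le\|A\|_{\mathcal S^p_\nu}^{1/p}$. Your observation that the two ranges yield different exponents ($q/p$ versus $1/p$) and that the right-hand side is more naturally $\|A\|_{\mathcal S^p_\nu}$ is a fair reading of what the paper's own proof actually delivers.
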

\begin{proof}
Let $0<p<q\leq 1$, and let $S^q_\omega=\{f\in\ell^q_\nu(X,\mathcal{H}):\|f\|_{\ell^q_\nu(X,\mathcal{H})}=1\}$ be the unit sphere in $\ell^q_\nu(X,\mathcal{H})$. Then
\begin{eqnarray*}
\|A\|_{B(\ell^q_\nu(X,\mathcal H))} & = & \sup_{f\in S^q_\omega}\sum_{k\in X}\left\|\sum_{l\in X}A_{k,l}f_l\right\|^q \nu(l+k-l)^q\\
& \leq & \sup_{f\in S^q_\omega}\sum_{k\in X}\sum_{l\in X}\|A_{k,l}f_l\|^q \nu(l+k-l)^q\\
& \leq & C^q \sup_{f\in S^q_\omega}\sum_{l\in X}\|f_l\|^q \nu(l)^q\sum_{k\in X}\|A_{k,l}\|^q\omega(k-l)^q\\
& \leq & C^q \sup_{f\in S^q_\omega}\sum_{l\in X}\|f_l\|^q \nu(l)^q\left(\sum_{k\in X}\|A_{k,l}\|^p\omega(k-l)^p\right)^{\frac{q}{p}}\\
& \leq & C^q\|A\|_{B(\ell^p_\omega(X,\mathcal H))}^{\frac{q}{p}}.
\end{eqnarray*}
Thus $\|A\|_{B(\ell^q_\nu(X,\mathcal H))} \leq  C^q\|A\|_{B(\ell^p_\omega(X,\mathcal H))}^{\frac{q}{p}}$ for all $p\leq q \leq 1$. Let $q>1$. Then, by \cite[Lemma 4.4]{KB}, $\|A\|_{B(\ell^q_\nu(X,\mathcal H))}\leq C \|A\|_{\mathcal S_\omega^1}\leq C \|A\|_{\mathcal S_\omega^p}^{\frac{1}{p}}$
\end{proof}

The above lemma ensures that $\mathcal{S}^p_\omega$ is a subalgebra of $B(\ell^2(X,\mathcal{H}))$. Next, we proceed to establish the quasi-norm structure. For the rest of this section, we fix $0<p\leq1$.

\begin{proposition} \label{prop:Spw is algebra}
If $\omega$ is an submultiplicative and symmetric weight, then $\mathcal S_\omega^p$ is a unital $p$-Banach $\ast$-algebra with respect to matrix multiplication and involution \eqref{def:matrixinvolution}.
\end{proposition}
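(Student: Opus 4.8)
The plan is to build on the two preceding results rather than start from scratch. The Proposition just above already gives that $(\mathcal S_\nu^p,\|\cdot\|_{\mathcal S_\nu^p})$ is a $p$-Banach space for $0<p\le1$, so the norm axioms (1)--(3) and completeness are in hand; and the preceding Lemma, applied with $q=2$ and the trivial weight $m\equiv1$ (which is $\nu$-moderate since the admissible $\nu$ satisfies $\nu\ge1$), shows every $A\in\mathcal S_\nu^p$ is bounded on $\ell^2(X,\mathcal H)$ (for $p=1$ one uses \cite[Lemma 4.4]{KB} directly). Hence $\mathcal S_\nu^p\subset B(\ell^2(X,\mathcal H))$, and since $\mathbb M(AB)=\mathbb M(A)\mathbb M(B)$, matrix multiplication coincides with operator composition. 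This is what I would invoke first: it makes the product associative and distributive and identifies the entries of $C=AB$ as $C_{k,l}=\sum_{m\in X}A_{k,m}B_{m,l}$, with convergence in $B(\mathcal H)$, for free. It then remains to check axiom (4) (submultiplicativity of the norm), the existence of a unit, and that the involution \eqref{def:matrixinvolution} is an isometry on $\mathcal S_\nu^p$.

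The substantive step is submultiplicativity. Starting from the entrywise bound $\|C_{k,l}\|\le\sum_m\|A_{k,m}\|\,\|B_{m,l}\|$ and raising to the power $p$, I would apply the inequality \eqref{eq:ineqp<1} valid for $0<p\le1$ to get $\|C_{k,l}\|^p\le\sum_m\|A_{k,m}\|^p\|B_{m,l}\|^p$. Multiplying by $\nu(k-l)^p$ and inserting the submultiplicative splitting $\nu(k-l)^p\le\nu(k-m)^p\nu(m-l)^p$, the weighted row sum $\sum_l\|C_{k,l}\|^p\nu(k-l)^p$ factors as $\sum_m\big(\|A_{k,m}\|^p\nu(k-m)^p\big)\big(\sum_l\|B_{m,l}\|^p\nu(m-l)^p\big)$, and the inner sum is at most $\|B\|_{\mathcal S_\nu^p}$ while the outer sum over $m$ is at most $\|A\|_{\mathcal S_\nu^p}$. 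Summing over $k$ first gives the symmetric estimate on the column sums. Taking the two suprema and the maximum yields $\|AB\|_{\mathcal S_\nu^p}\le\|A\|_{\mathcal S_\nu^p}\|B\|_{\mathcal S_\nu^p}$, so $\mathcal S_\nu^p$ is closed under products and is a $p$-normed algebra.

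For the unit I would take the identity operator, whose canonical matrix is the diagonal $[\delta_{k,l}I_{\mathcal H}]$; its norm equals $\nu(0)^p<\infty$, so it lies in $\mathcal S_\nu^p$ and is the multiplicative identity. For the $\ast$-structure, recall $(A^\ast)_{k,l}=A_{l,k}^\ast$, hence $\|(A^\ast)_{k,l}\|=\|A_{l,k}\|$; using the symmetry $\nu(k-l)=\nu(l-k)$ one sees that passing to $A^\ast$ interchanges the row-sum quantity and the column-sum quantity appearing in the definition of $\|\cdot\|_{\mathcal S_\nu^p}$. Since that norm is the maximum of the two, it is unchanged, giving $\|A^\ast\|_{\mathcal S_\nu^p}=\|A\|_{\mathcal S_\nu^p}$, so the involution is isometric; the identities $(A^\ast)^\ast=A$, $(\alpha A)^\ast=\bar\alpha A^\ast$ and $(AB)^\ast=B^\ast A^\ast$ are inherited from $B(\ell^2(X,\mathcal H))$. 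Combining these establishes that $\mathcal S_\nu^p$ is a unital $p$-Banach $\ast$-algebra.

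I expect the submultiplicativity estimate to be the only real work, and it is more bookkeeping than obstacle. The point worth flagging is exactly which properties of the weight it consumes: submultiplicativity (to split $\nu(k-l)$ across the intermediate index) and symmetry (both to treat rows and columns on equal footing and to make the involution isometric), whereas the GRS-condition plays no role here. The feature genuinely special to the range $0<p\le1$ is the subadditivity \eqref{eq:ineqp<1}, which collapses the $p$-th power of the entrywise triangle inequality into a single sum; in the $p>1$ regime this clean step is unavailable and must be replaced by a Hölder/Young-type argument, which is why that case is handled separately.
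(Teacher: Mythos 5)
Your argument is correct and complete. Note that the paper states this proposition without any proof (it is treated as routine, with the $p=1$ case covered by the corresponding result in \cite{KB}); what you have written is exactly the argument the authors leave implicit: submultiplicativity of $\|\cdot\|_{\mathcal S_\nu^p}$ via the splitting $\nu(k-l)\le\nu(k-m)\nu(m-l)$ combined with the $p$-subadditivity \eqref{eq:ineqp<1}, the diagonal identity as unit, and symmetry of $\nu$ to make the involution swap the row and column suprema. Your closing remark correctly isolates the properties of $\nu$ actually consumed (submultiplicativity and symmetry, not the GRS condition) and why the range $0<p\le1$ is the easy one.
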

\begin{proof}
It is easy to verify that $\mathcal{S}^p_\omega$ is a $p$-Banach space. We just show that the $p$-norm is submultiplicative. Let $A=[A_{k,l}]$ and $B=[B_{k,l}]$ be in $S^p_\omega$. Then using submultiplicativity of weight and \eqref{eq:ineqp<1}, we get
\begin{align*}
    \sum_{l\in X}\|(AB)_{k,l}\|^p\omega(k-l)^p 
    &= \sum_{l\in X} \left\|\sum_{m \in X} A_{k,m} B_{m,l} \right\|^p \omega(k-m+m-l)^p \\
    &\leq \sum_{l\in X} \sum_{m \in X} \|A_{k,m}\|^p \|B_{m,l}\|^p \omega(k-m)^p\omega(m-l)^p \\
    &\leq \sum_{m \in X} \|A_{k,m}\|^p \omega(k-m)^p \sum_{l\in X} \|B_{m,l}\|^p \omega(m-l)^p \\
    &\leq \|A\|_{\mathcal{S}^p_\omega} \|B\|_{\mathcal{S}^p_\omega}
\end{align*}
which gives
\begin{align*}
    \sup_{k\in X} \sum_{l\in X}\|(AB)_{k,l}\|^p\omega(k-l)^p \leq \|A\|_{\mathcal{S}^p_\omega} \|B\|_{\mathcal{S}^p_\omega}.
\end{align*}
Similarly, it follows that
\begin{align*}
    \sup_{l\in X} \sum_{k\in X}\|(AB)_{k,l}\|^p\omega(k-l)^p \leq \|A\|_{\mathcal{S}^p_\omega} \|B\|_{\mathcal{S}^p_\omega}.
\end{align*}
So, $\|AB\|_{\mathcal{S}^p_\omega}\leq \|A\|_{\mathcal{S}^p_\omega} \|B\|_{\mathcal{S}^p_\omega}$. Thus, $\mathcal{S}^p_\omega$ is a $p$-Banach algebra. Moreover, the fact that $\omega$ is symmetric implies that $\|A^\ast\|_{\mathcal{S}^p_\omega} = \|A\|_{\mathcal{S}^p_\omega}$. This completes the proof.
\end{proof}

As $\mathcal{S}^p_\omega$ is established as a $p$-Banach $\ast$-algebra of $B(\ell^2(X,\mathcal{H}))$, we move towards establishing the inequality for the spectral radius in order to use Hulanicki's lemma. The trick to do is to construct a net of polynomial weights and approximating the spectral radius using the approximation of the $p$-norm.

\begin{lemma}
Let $\omega(x)=\omega_\delta(x)=(1+|x|)^\delta$ for some $\delta \in (0,1]$. Then $r_{\mathcal S_\omega^p}(A)=r_{\mathcal S^p}(A)$ for all $A\in \mathcal S_\omega^p$.
\end{lemma}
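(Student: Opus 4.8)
The plan is to prove the two inequalities $r_{\mathcal S^p}(A)\le r_{\mathcal S_\nu^p}(A)$ and $r_{\mathcal S_\nu^p}(A)\le r_{\mathcal S^p}(A)$ separately, where $\mathcal S^p=\mathcal S^p_\mathbf{1}$ denotes the unweighted ($\nu\equiv 1$) Schur algebra; the second inequality is the substantive one. Since $\nu=\nu_\delta$ is submultiplicative and symmetric we have $\nu\ge 1$, so $\|T\|_{\mathcal S^p}\le\|T\|_{\mathcal S_\nu^p}$ for every $T\in\mathcal S_\nu^p$; in particular $\mathcal S_\nu^p\subset\mathcal S^p$, and applying this with $T=A^n$ together with the spectral radius formula $r(A)=\lim_n\|A^n\|^{1/n}$ (valid in both $p$-Banach algebras) gives the first inequality at once. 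For the reverse inequality the idea is to show that the weighted norms of the powers of $A$ grow only subexponentially faster than the unweighted ones. The decisive geometric input is the \emph{longest-hop estimate}: for any path $k=k_0,k_1,\dots,k_n=l$ in $X$ one has $|k-l|\le\sum_{i=0}^{n-1}|k_i-k_{i+1}|\le n\max_i|k_i-k_{i+1}|$, whence, since $\nu_\delta(x)=(1+|x|)^\delta$ is increasing in $|x|$,
\[
\nu(k-l)\le n^{\delta}\,\max_{0\le i\le n-1}\nu(k_i-k_{i+1}).
\]

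First I would fix $m\ge 1$, set $C=A^m\in\mathcal S_\nu^p$, and expand $C^n=A^{mn}$ entrywise as $(C^n)_{k,l}=\sum_{k_0=k,\dots,k_n=l}\prod_{i=0}^{n-1}C_{k_i,k_{i+1}}$. Applying \eqref{eq:ineqp<1} to the (countable) sum over paths, then the operator-norm submultiplicativity $\|\prod_i C_{k_i,k_{i+1}}\|\le\prod_i\|C_{k_i,k_{i+1}}\|$, then the longest-hop estimate followed by $\max_i(\cdot)^p\le\sum_i(\cdot)^p$, bounds the weighted row-sum $\sup_k\sum_l\|(C^n)_{k,l}\|^p\nu(k-l)^p$ by $n^{\delta p}\sum_{j=0}^{n-1}$ of path-sums in which exactly the $j$-th factor carries the weight $\nu(k_j-k_{j+1})^p$. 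Introducing the nonnegative scalar matrices $B=[\,\|C_{k,l}\|^p\,]$ and $B^{(\nu)}=[\,\|C_{k,l}\|^p\nu(k-l)^p\,]$, the $j$-th path-sum is exactly $\sum_l(B^{j}B^{(\nu)}B^{\,n-1-j})_{k,l}$. Since the row-sum functional $\|\cdot\|_{\mathrm{row}}=\sup_k\sum_l(\cdot)_{k,l}$ is submultiplicative on nonnegative matrices, and $\|B\|_{\mathrm{row}}\le\|C\|_{\mathcal S^p}$ while $\|B^{(\nu)}\|_{\mathrm{row}}\le\|C\|_{\mathcal S_\nu^p}$, each of the $n$ terms is at most $\|C\|_{\mathcal S^p}^{\,n-1}\|C\|_{\mathcal S_\nu^p}$. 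Treating the column-sums symmetrically (working with the transpose) gives
\[
\|A^{mn}\|_{\mathcal S_\nu^p}=\|C^n\|_{\mathcal S_\nu^p}\le n^{\delta p+1}\,\|A^m\|_{\mathcal S^p}^{\,n-1}\,\|A^m\|_{\mathcal S_\nu^p}.
\]

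Finally I would take $(mn)$-th roots, obtaining $\|A^{mn}\|_{\mathcal S_\nu^p}^{1/(mn)}\le n^{(\delta p+1)/(mn)}\big(\|A^m\|_{\mathcal S^p}^{1/m}\big)^{(n-1)/n}\|A^m\|_{\mathcal S_\nu^p}^{1/(mn)}$. Fixing $m$ and letting $n\to\infty$, the first and third factors tend to $1$, and because the limit defining the spectral radius exists its value is read off along the subsequence $(mn)_n$, yielding $r_{\mathcal S_\nu^p}(A)\le\|A^m\|_{\mathcal S^p}^{1/m}$. As this holds for every $m$, letting $m\to\infty$ gives $r_{\mathcal S_\nu^p}(A)\le r_{\mathcal S^p}(A)$, which together with the easy direction proves equality. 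The main obstacle — and the only place where the special form of $\nu_\delta$ really matters — is the passage from the norm to the spectral radius: a naive single-hop decomposition delivers only $r_{\mathcal S_\nu^p}(A)\le\|A\|_{\mathcal S^p}$, so the essential device is to run the longest-hop estimate on the blocked product $(A^m)^n$ and then optimize over $m$, which upgrades $\|A^m\|_{\mathcal S^p}^{1/m}$ to $r_{\mathcal S^p}(A)$; the polynomial prefactor $n^{\delta p+1}$ produced by the longest-hop bound is harmless precisely because its $(mn)$-th root tends to $1$.
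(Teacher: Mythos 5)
Your proof is correct (for the operative range $0<p\le 1$ --- your use of $(a+b)^p\le a^p+b^p$ over the path sum needs $p\le 1$, just as the paper's own argument implicitly needs $p\delta\le 1$), but it takes a genuinely different route from the paper. The paper perturbs the \emph{weight} rather than expanding the \emph{powers}: it introduces the dilated weights $\mu_\epsilon(x)=(1+\epsilon|x|)^\delta$, observes that $\mu_\epsilon\le\nu\le\epsilon^{-\delta}\mu_\epsilon$ so that $\mathcal S^p_{\mu_\epsilon}$ and $\mathcal S^p_\nu$ have equivalent norms and hence equal spectral radii, and then uses the concavity estimate $(1+\epsilon t)^{p\delta}\le 1+\epsilon^{p\delta}t^{p\delta}$ to get $r_{\mathcal S^p_\nu}(A)\le\|A\|_{\mathcal S^p_{\mu_\epsilon}}\le\|A\|_{\mathcal S^p}+\epsilon^{\delta p}\|A\|_{\mathcal S^p_\nu}$; letting $\epsilon\to 0$ yields $r_{\mathcal S^p_\nu}(A)\le\|A\|_{\mathcal S^p}$, and the upgrade from norm to spectral radius is then done exactly as in your last step, by applying the bound to $A^n$ and taking roots. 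Your longest-hop decomposition of $(A^m)^n$, in the Jaffard/Sun/Gr\"ochenig--Leinert style, reaches the same intermediate conclusion in the quantitative form $\|A^{mn}\|_{\mathcal S^p_\nu}\le n^{\delta p+1}\|A^m\|_{\mathcal S^p}^{\,n-1}\|A^m\|_{\mathcal S^p_\nu}$. Both arguments ultimately rest on the same two ingredients --- subadditivity of $t\mapsto t^\delta$ type estimates for the polynomial weight, and the power trick to pass from a norm bound to a spectral radius bound --- but the paper's version is shorter at the cost of having to know that each $\mu_\epsilon$ is submultiplicative (so that $\mathcal S^p_{\mu_\epsilon}$ is a $p$-Banach algebra in which $r\le\|\cdot\|$), whereas yours is longer but entirely self-contained at the level of matrix entries and makes the polynomial growth of the weighted norms of powers explicit. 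One cosmetic caveat: handle the degenerate case $\|A^m\|_{\mathcal S^p}=0$ separately (then $A$ is nilpotent and both spectral radii vanish), since your factor $\|A^m\|_{\mathcal S^p}^{\,(n-1)/(mn)}$ is otherwise ill-behaved.
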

\begin{proof}
For $0<\epsilon\leq 1$, let $\mu_\epsilon(x)=(1+\epsilon |x|)^\delta$. Then $\mu_\epsilon$ is a submultiplicative weight and $\mathcal S^p_{\mu_\epsilon}$ is a $p$-Banach algebra. Note that $\mu_\epsilon \leq \omega \leq \epsilon^{-\delta}\mu_\epsilon$. This gives $\|A\|_{\mathcal S_\omega^p}\leq \epsilon^{-p\delta}\|A\|_{\mathcal S_{\mu_\epsilon}}$ and hence $r_{\mathcal S_\omega^p}(A)\leq r_{\mathcal S^p_{\mu_\epsilon}}(A)\leq \|A\|_{\mathcal S_{\mu_\epsilon}^p}$ for all $A \in \mathcal S_\omega^p$.

Let $A\in \mathcal S_\omega^p$. Note that $t^\delta\leq (1+t)^\delta \leq 1+t^\delta$ for $t\geq 0$. Now,
\begin{eqnarray*}
\sup_{k\in X}\sum_{l\in X}\|A_{k,l}\|^p & \leq & \sup_{k\in X}\sum_{l\in X}\|A_{k,l}\|^p(1+\epsilon|k-l|)^{p\delta}\\
& \leq & \sup_{k\in X}\sum_{l\in X}\|A_{k,l}\|^p(1+\epsilon^{p\delta}|k-l|^{p\delta})\\
& \leq & \sup_{k\in X}\sum_{l\in X}\|A_{k,l}\|^p+\epsilon^{p\delta}\sup_{k\in X}\sum_{l\in X}\|A_{k,l}\|^p(1+|k-l|)^{p\delta}.
\end{eqnarray*}
Interchanging the roles of $k$ and $l$, we get
\begin{eqnarray*}
\sup_{l\in X}\sum_{k\in X}\|A_{k,l}\|^p & \leq & \sup_{l\in X}\sum_{k\in X}\|A_{k,l}\|^p(1+\epsilon|k-l|)^{p\delta}\\
& \leq & \sup_{l\in X}\sum_{k\in X}\|A_{k,l}\|^p+\epsilon^{p\delta}\sup_{l\in X}\sum_{k\in X}\|A_{k,l}\|^p(1+|k-l|)^{p\delta}.
\end{eqnarray*}
The above inequalities will give 
$$\|A\|_{\mathcal S_\omega^p}\leq \|A\|_{\mathcal S_{\mu_\epsilon}^p}\leq \|A\|_{\mathcal S^p}+\epsilon^{\delta p}\|A\|_{\mathcal S_\omega^p}.$$ 
Therefore 
$$r_{\mathcal S_\omega^p}(A)\leq \lim_{\epsilon \to 0}\|A\|_{\mathcal S_{\mu_\epsilon}}=\|A\|_{\mathcal S^p}$$ 
and hence $r_{\mathcal S_\omega^p}(A)\leq r_{\mathcal S^p}(A)$ for all $A\in \mathcal S_\omega^p$. Since $\mathcal S_\omega^p\subset \mathcal S^p$, $r_{\mathcal S^p}(A)\leq r_{\mathcal S_\omega^p}(A)$ for all $A\in \mathcal S_\omega^p$.
\end{proof}

\begin{theorem} \label{thm:equivradius}
Let $\omega_\delta(x)=(1+|x|)^\delta$ for some $\delta\in (0,1]$. Then, for all $A=A^\ast \in \mathcal S_{\omega_\delta}^p$, we have
$$r_{\mathcal S_{\omega_\delta}^p}(A)=r_{\mathcal S^p}(A)=r_{B(\ell^2(X,\mathcal H))}(A)^p.$$
\end{theorem}

\begin{proof}
Let $B=B(\ell^2(X,\mathcal H))$. We only need to prove that $r_B(A)^p=r_{\mathcal S^p}(A)$ for all $A=A^\ast \in \mathcal S_{\omega_\delta}^p$. Fix some non-zero $A=A^\ast \in \mathcal S^p_{\omega_\delta}$. Since $A$ is self-adjoint, one has $$\|A\|_{\mathcal S^p}=\sup_{l\in X}\sum_{k\in X}\|A_{k,l}\|^p.$$ So, if $n\in \mathbb N$, then
\begin{eqnarray*}
\|A^{n+1}\|_{\mathcal S^p}\leq \sup_{l\in X}\sum_{\substack{k\in X \\ |k-l|\leq 2^n}}\|[A^{n+1}]_{k,l}\|^p+\sup_{l\in X}\sum_{\substack{k\in X \\ |k-l|> 2^n}}\|[A^{n+1}]_{k,l}\|^p.
\end{eqnarray*}

For $l\in X$, let $B_{2^n}(l)=\{k\in X:|k-l|\leq 2^n\}$. Then there exists $C_1>0$ such that $|B_{2^n}(l)|\leq C_1 2^{nd}$ for all $l \in X$.

Let $l\in X$ be arbitrary. Then
\begin{eqnarray*}
\sum_{\substack{k\in X \\ |k-l|\leq 2^n}}\|[A^{n+1}]_{k,l}\|^p & = & \sum_{\substack{k\in X \\ |k-l|\leq 2^n}}\left\|\sum_{m\in X}[A^n]_{k,m} A_{m,l}\right\|^p\cdot 1\\
& \leq & (C_12^{nd})^{\frac{1}{2}} \left(\sum_{\substack{k\in X \\ |k-l|\leq 2^n}}\left\|\sum_{m\in X}[A^n]_{k,m} A_{m,l}\right\|^{2p}\right)^{\frac{1}{2}}.
\end{eqnarray*}
Take $\epsilon^{(k)}=\|A\|_{B}^{2np}(1+|k|)^{-(d+1)}$ for all $k$. Then there exists $f^{(k)}\in \mathcal H$ such that $$\left\|\sum_{m\in X}[A^n]_{k,m}A_{m,l}\right\|^{2p}-\epsilon^{(k)}<\left\|\sum_{m\in X}[A^n]_{k,m}A_{m,l}f^{(k)}\right\|^{2p}.$$ This gives
\begin{eqnarray*}
&& \sum_{\substack{k\in X \\ |k-l|\leq 2^n}}\left\|\sum_{m\in X}[A^n]_{k,m}A_{m,l}\right\|^{2p}\\
& \leq & \|A\|_B^{2np}\sum_{\substack{k\in X \\ |k-l|\leq 2^n}}(1+|k|)^{-(d+1)}+\sum_{\substack{k\in X \\ |k-l|\leq 2^n}}\left\|\sum_{m\in X}[A^n]_{k,m}A_{m,l}f^{(k)}\right\|^{2p}.
\end{eqnarray*}

Let $t\in \mathbb N$ be such that $2^tp>2$. Then
\begin{eqnarray*}
&& \sum_{\substack{k\in X \\ |k-l|\leq 2^n}}\left\|\sum_{m\in X}[A^n]_{k,m}A_{m,l}f^{(k)}\right\|^{2p}\\
& \leq & \left(\sum_{\substack{k\in X \\ |k-l|\leq 2^n}}\left\|\sum_{m\in X}[A^n]_{k,m}A_{m,l}f^{(k)}\right\|^{2^2p}\right)^{\frac{1}{2}} \left(\sum_{\substack{k\in X \\ |k-l|\leq 2^n}} 1\right)^{\frac{1}{2}}\\
& \leq & \left(\sum_{\substack{k\in X \\ |k-l|\leq 2^n}}\left\|\sum_{m\in X}[A^n]_{k,m}A_{m,l}f^{(k)}\right\|^{2^3p}\right)^{\frac{1}{2^2}} \left(\sum_{\substack{k\in X \\ |k-l|\leq 2^n}} 1\right)^{\frac{1}{2}+\frac{1}{2^2}}\\
& \leq & \left(\sum_{\substack{k\in X \\ |k-l|\leq 2^n}}\left\|\sum_{m\in X}[A^n]_{k,m}A_{m,l}f^{(k)}\right\|^{2^tp}\right)^{\frac{1}{2^{t-1}}} \left(\sum_{\substack{k\in X \\ |k-l|\leq 2^n}} 1\right)^{\frac{1}{2}+\frac{1}{2^2}+\cdots +\frac{1}{2^{t-1}}}\\
& \leq & \left(\sum_{\substack{k\in X \\ |k-l|\leq 2^n}}\left\|\sum_{m\in X}[A^n]_{k,m}A_{m,l}f^{(k)}\right\|^{2}\right)^{p} (C_12^{nd})^{\frac{1}{2}+\frac{1}{2^2}+\cdots +\frac{1}{2^{t-1}}}\\
& \leq & \|A\|_{B}^{2np}\|A\|_{\mathcal S_2^p}^{2p}(C_12^{nd})^{\frac{1}{2}+\frac{1}{2^2}+\cdots +\frac{1}{2^{t-1}}+p}.
\end{eqnarray*}
So,
\begin{eqnarray*}
\sum_{\substack{k\in X \\ |k-l|\leq 2^n}}\left\|\sum_{m\in X}[A^n]_{k,m}A_{m,l}\right\|^{2p} & \leq & C_2 \|A\|_B^{2np}+\|A\|^{2np}_B\|A\|_{\mathcal S_2^p}^{2p}(C_12^{nd})^{\frac{1}{2}+\frac{1}{2^2}+\cdots +\frac{1}{2^{t-1}}+p}.
\end{eqnarray*}
Thus
\begin{eqnarray*}
\sup_{l\in X}\sum_{\substack{k\in X \\ |k-l|\leq 2^n}}\|[A^{n+1}]_{k,l}\|^p & \leq & (C_12^{nd})^{\frac{1}{2}}\left(C_2 \|A\|_B^{2np}+\|A\|^{2np}_B\|A\|_{\mathcal S_2^p}^{2p}(C_12^{nd})^{\frac{1}{2}+\frac{1}{2^2}+\cdots +\frac{1}{2^{t-1}}+p}\right)^{\frac{1}{2}}\\
&\leq & (C_1C_2)^{\frac{1}{2}}2^{\frac{nd}{2}}\|A\|_B^{np}+(C_12^{nd})^{\frac{1}{2}+\frac{1}{2^2}+\cdots +\frac{1}{2^t}+\frac{p}{2}}\|A\|^{np}_B\|A\|_{\mathcal S_2^p}^{p}.
\end{eqnarray*}
For $|k-l|>2^n$, one has $(1+|k-l|)^\delta>(1+2^n)^\delta>2^{n\delta}$ and hence $1<2^{-n\delta}\omega_\delta(k-l)$. Therefore
\begin{eqnarray*}
\sup_{l\in X}\sum_{\substack{k\in X \\ |k-l|>2^n}} \|[A^{n+1}]_{k,l}\|^p & \leq & 2^{-pn\delta}\sup_{l\in X}\sum_{\substack{k\in X \\ |k-l|>2^n}}\|[A^{n+1}]_{k,l}\|^p\omega_\delta(k-l)\\
& \leq & 2^{-pn\delta}\|A^{n+1}\|_{\mathcal S^p_{\omega_\delta}}.
\end{eqnarray*}
Take $\alpha=\frac{1}{2}+\frac{1}{2^2}+\cdots +\frac{1}{2^t}+\frac{p}{2}$. Then we have
\begin{align*}
\|A^{n+1}\|_{\mathcal S^p}^{\frac{1}{n+1}}
 \leq  (C_1C_2)^{\frac{1}{2(n+1)}}2^{\frac{nd}{2(n+1)}}\|A\|_B^{\frac{np}{n+1}} +(C_12^{nd})^{\frac{\alpha}{n+1}}\|A\|^{\frac{np}{n+1}}_B\|A\|_{\mathcal S_2^p}^{\frac{p}{n+1}}+2^{-\frac{pn\delta}{n+1}}\|A^{n+1}\|_{\mathcal S^p_{\omega_\delta}}^{\frac{1}{n+1}}.
\end{align*}
This gives
\begin{eqnarray*}
r_{\mathcal S^p}(A) &\leq & (2^{\frac{d}{2}}+2^{\alpha d})\|A\|_B^p+2^{-p\delta}r_{\mathcal S^p_{\omega_\delta}}(A)\\
&=& (2^{\frac{d}{2}}+2^{\alpha d})\|A\|_B^p+2^{-p\delta}r_{\mathcal S^p}(A).
\end{eqnarray*}
Therefore $$r_{\mathcal S^p}(A)\leq (1-2^{-p\delta})^{-1}(2^{\frac{d}{2}}+2^{\alpha d})\|A\|_B^p.$$ The above holds for all self-adjoint $A \in \mathcal S_{\omega_\delta}^p$. So, $$r_{\mathcal S^p}(A)=r_{\mathcal S^p}(A^n)^{\frac{1}{n}}\leq ((1-2^{-p\delta})^{-1}(2^{\frac{d}{2}}+2^{\alpha d}))^{\frac{1}{n}}\|A^n\|_B^{\frac{p}{n}}$$ and hence $r_{\mathcal S^p}(A)\leq r_B(A)^p$ for all $A=A^\ast \in \mathcal S_{\omega_\delta}^p$.
\end{proof}

We require the following technical lemma for weights.
\begin{lemma} \cite[Lemma 8]{GroLei} \label{lem:8}
Let $\omega$ be an unbounded admissible weight $\omega$. Then there is a sequence of admissible weights $\omega_n$ satisfying 
\begin{enumerate}
    \item $\omega_{n+1}\leq\omega_n\leq\omega$ for all $n\in\mathbb{N}$;
    \item there are constants $c_n>0$ such that $\omega\leq c_n\omega_n$ for all $n\in\mathbb{N}$; and 
    \item $\displaystyle \lim_{n\to\infty} \omega_n = 1$ uniformly on compact sets of $\mathbb{R}^d$.
\end{enumerate}
\end{lemma}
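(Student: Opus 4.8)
The plan is to pass to the additive (logarithmic) picture and realize the approximating weights as multiplicative infimal convolutions of $\nu$ with flat exponential weights. Since $\nu$ is submultiplicative and symmetric we already noted $\nu\geq1$, so $\omega:=\log\nu$ is a nonnegative, symmetric, subadditive function with $\omega(0)=0$, and the GRS-condition becomes $\lim_{k\to\infty}\omega(kx)/k=0$ for every $x\in\mathbb{R}^d$. The single genuinely analytic input I would isolate first is that, for a measurable submultiplicative weight, the GRS-condition forces subexponential growth, i.e. $\omega(x)/|x|\to0$ as $|x|\to\infty$. This follows by combining Fekete's subadditive lemma (for fixed $x$ the sequence $\omega(kx)$ is subadditive in $k$, so $\tau(x):=\lim_k\omega(kx)/k=\inf_k\omega(kx)/k$ exists and is sublinear, with $\tau\equiv0$ by GRS) together with the local boundedness of measurable submultiplicative functions and a compactness argument on the unit sphere. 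I expect this to be the main obstacle; if one prefers, it may simply be quoted from the theory of GRS-weights.

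Next I would fix a sequence $\epsilon_n\downarrow0$ and define, for each $n$,
\[
\nu_n(x)=\inf_{y\in\mathbb{R}^d}\nu(y)\,e^{\epsilon_n|x-y|},\qquad\text{equivalently}\qquad \omega_n(x)=\inf_{y\in\mathbb{R}^d}\bigl(\omega(y)+\epsilon_n|x-y|\bigr),
\]
the infimal convolution of $\omega$ with the norm $\epsilon_n|\cdot|$. The structural checks are that each $\nu_n$ is again admissible: submultiplicativity of $\nu_n$ is precisely subadditivity of $\omega_n$, which for an infimal convolution of two subadditive functions follows by testing the infimum defining $\omega_n(x_1+x_2)$ with the splitting $y=y_1+y_2$; symmetry is inherited from $\omega$ and $|\cdot|$; measurability holds because $\nu_n$ is an infimum of functions that are continuous in $x$, hence upper semicontinuous; and the GRS-condition for $\nu_n$ then follows by squeezing, once the bounds $1\leq\nu_n\leq\nu$ of the next paragraph are in hand.

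It then remains to read the three listed properties off the definition. Taking $y=x$ in the infimum gives $\omega_n\leq\omega$, and since $\epsilon_{n+1}|\cdot|\leq\epsilon_n|\cdot|$ the infimal convolution is monotone, so $\omega_{n+1}\leq\omega_n$; this is (i). Taking $y=0$ gives $\omega_n(x)\leq\epsilon_n|x|$, so on any ball $\{|x|\leq R\}$ one has $1\leq\nu_n(x)\leq e^{\epsilon_nR}\to1$, which is the uniform convergence (iii). For (ii) I would write
\[
\omega(x)-\omega_n(x)=\sup_{y}\bigl(\omega(x)-\omega(y)-\epsilon_n|x-y|\bigr)\leq\sup_{z}\bigl(\omega(z)-\epsilon_n|z|\bigr),
\]
using $\omega(x)-\omega(y)\leq\omega(x-y)$ and substituting $z=x-y$; the subexponential growth from the first step makes $\omega(z)-\epsilon_n|z|\leq0$ for $|z|$ large and keeps it bounded by local boundedness for $|z|$ small, so the supremum is a finite constant $\log c_n$, giving $\nu\leq c_n\nu_n$.

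In summary, the only nonformal ingredient is the subexponential-growth consequence of the GRS-condition, which is exactly what powers the finiteness of $c_n$ in (ii); everything else is the soft calculus of infimal convolution (monotonicity, the trivial test points $y=x$ and $y=0$, and the splitting argument for subadditivity). I would therefore spend the bulk of the write-up on that first lemma and treat the remaining verifications as short formal computations.
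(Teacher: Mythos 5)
The paper offers no proof of this lemma at all --- it is quoted directly from \cite{GroLei}, so there is no in-house argument to compare against; judged on its own, your proof is correct and complete. The infimal convolution $\omega_n(x)=\inf_y\bigl(\omega(y)+\epsilon_n|x-y|\bigr)$ does everything you claim: the test points $y=x$ and $y=0$ give (i) and (iii), the splitting $y=y_1+y_2$ gives subadditivity, symmetry passes through the infimum, and $\omega_n$ is in fact $\epsilon_n$-Lipschitz (an infimum of $\epsilon_n$-Lipschitz functions which is finite because $0\le\omega_n\le\epsilon_n|\cdot|$), so measurability comes for free and the $\nu_n$ are even continuous. Two points deserve explicit care in a write-up. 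First, as you correctly identify, everything hinges on the estimate $\sup_z\bigl(\omega(z)-\epsilon|z|\bigr)<\infty$ for every $\epsilon>0$; your Fekete-plus-net argument does prove it, but it genuinely requires the classical fact that a finite measurable submultiplicative function is bounded on compact sets (Hille--Phillips) --- under the paper's definition of a weight as merely a measurable map into $[0,\infty)$ this is not free, and it is needed both to control the remainder term in the net argument (via $\omega(y)\le M(1+|y|)$) and to bound $\sup_z(\omega(z)-\epsilon|z|)$ for small $|z|$; state it or cite it. Second, the GRS-condition for $\nu_n$ follows by squeezing only once you know $1\le\nu_n\le\nu$, so the logical order should be: establish $\nu_n\ge 1$ (each term in the infimum is nonnegative) before invoking the squeeze. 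With those two points made explicit, this is a clean, self-contained replacement for the citation, and arguably more informative than the quoted source since it exhibits the approximating weights concretely.
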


Next, we state a lemma whose proof goes along the same line of the proof of \cite[Lemma 9 (a)]{GroLei} with minor modification.
\begin{lemma} \label{lem:9}
Let $\omega$ be an admissible weight on $\mathbb{R}^d$ which also satisfies the weak growth condition. Then, for all $A=A^\ast \in \mathcal{S}^p_\omega$ and weights $\omega_n$ as in Lemma \ref{lem:8},
$$\lim_{n\to\infty} \|A\|_{\mathcal{S}^p_{\omega_n}} = \|A\|_{\mathcal{S}^p_\omega} \quad \text{and} \quad r_{\mathcal{S}^p_{\omega_n}}(A)=r_{\mathcal{S}^p_\omega}(A)=\|A\|_{op}^p.$$
\end{lemma}

With last two lemmas and Theorem \ref{thm:equivradius} at disposal, we are now ready for the main theorem which follows from the following theorem. We follow the technique used in \cite[Lemma 9 (b)]{GroLei}.

\begin{theorem} \label{thm:main1}
Let $0<p\leq1$, $\omega$ be an admissible weight satisfying the weak growth condition \eqref{def:weakgrowthweight}, and let $A=A^\ast\in\mathcal{S}^p_{\omega}$. Then $$r_{\mathcal{S}^p_{\omega}}(A)=\|A\|_{op}^p.$$ 
\end{theorem}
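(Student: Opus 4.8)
The plan is to read the theorem as an application of Hulanicki's lemma (Theorem \ref{thm:Hul}), whose single nontrivial hypothesis is the radius identity for self-adjoint elements; everything else is bookkeeping. Write $B=B(\ell^2(X,\mathcal H))$ equipped with the $p$-$C^\ast$-norm $|T|=\|T\|_{op}^p$, so that $B$ is a symmetric $p$-Banach $\ast$-algebra and, for self-adjoint $T$, the spectral radius formula gives $r_B(T)=\lim_n\|T^n\|_{op}^{p/n}=\|T\|_{op}^p$. First I would record that $(\mathcal S^p_\nu,B)$ satisfies the standing hypotheses of Theorem \ref{thm:Hul}: by the embedding lemma, taking $m\equiv1$ (which is $\nu$-moderate since $\nu\ge1$) and $q=2$, every $A\in\mathcal S^p_\nu$ acts boundedly on $\ell^2(X,\mathcal H)$; moreover the matrix product and the involution \eqref{def:matrixinvolution} are exactly those inherited from $B$, and the two algebras share the identity. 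Thus $\mathcal S^p_\nu$ is a unital $\ast$-subalgebra of the symmetric $p$-$C^\ast$-algebra $B$.

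The heart of the matter is the equality $r_{\mathcal S^p_\nu}(A)=\|A\|_{op}^p$ for $A=A^\ast\in\mathcal S^p_\nu$. The inequality $\|A\|_{op}^p=r_B(A)\le r_{\mathcal S^p_\nu}(A)$ is automatic, since passing to a unital subalgebra can only enlarge the spectrum, $\sigma_B(A)\subseteq\sigma_{\mathcal S^p_\nu}(A)$. For the reverse inequality I would invoke the approximating weights $\nu_n$ of Lemma \ref{lem:8}, available because the weak growth condition \eqref{def:weakgrowthweight} forces $\nu$ to be unbounded. The mechanism is that from $\nu_n\le\nu\le c_n\nu_n$ one gets $\|A^k\|_{\mathcal S^p_{\nu_n}}\le\|A^k\|_{\mathcal S^p_\nu}\le c_n^p\|A^k\|_{\mathcal S^p_{\nu_n}}$, so after taking $k$-th roots and letting $k\to\infty$ the constant $c_n^p$ becomes irrelevant and $r_{\mathcal S^p_\nu}(A)=r_{\mathcal S^p_{\nu_n}}(A)$ for every $n$; the GRS condition, encoded in $\nu_n\to1$ on compacta, is precisely what reduces these sub-weights to the polynomial regime of Theorem \ref{thm:equivradius}. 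This reduction is exactly what Lemma \ref{lem:9} records, namely $r_{\mathcal S^p_{\nu_n}}(A)=r_{\mathcal S^p_\nu}(A)=\|A\|_{op}^p$, so I would cite it to conclude the upper bound and hence the first assertion of the theorem.

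With the identity $r_{\mathcal S^p_\nu}(A)=\|A\|_{op}^p=r_B(A)$ in hand for every self-adjoint $A\in\mathcal S^p_\nu$, condition $(2)$ of Theorem \ref{thm:Hul} is verified, and the theorem delivers at once that $\mathcal S^p_\nu$ is inverse closed in $B$ and that $\mathcal S^p_\nu$ is symmetric. Finally, inverse closedness upgrades the spectral coincidence from self-adjoint to arbitrary elements: for any $A\in\mathcal S^p_\nu$ and $\lambda\in\mathbb C$, the element $\lambda e-A$ again lies in $\mathcal S^p_\nu$ and is invertible there if and only if it is invertible in $B$, whence the spectrum of $A$ in $\mathcal S^p_\nu$ equals its spectrum in $B(\ell^2(X,\mathcal H))$, that is, $\sigma_{p\nu}(A)=\sigma(A)$.

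The main obstacle is the reverse inequality $r_{\mathcal S^p_\nu}(A)\le\|A\|_{op}^p$: the lower bound and the Hulanicki deduction are purely formal, but bounding the weighted spectral radius from above genuinely requires reducing an arbitrary admissible weight to the polynomial case, where the interplay of Lemma \ref{lem:8}, the GRS condition, and Theorem \ref{thm:equivradius} (all packaged in Lemma \ref{lem:9}) does the real work. I would therefore concentrate the effort on justifying that the constant-washing limit and the passage to $\nu_\delta$ hold uniformly in the power $k$, since that is the only place where the special structure of $\mathcal S^p_\nu$ and the admissibility of $\nu$ are actually used.
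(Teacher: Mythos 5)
Your overall strategy coincides with the paper's: the lower bound $\|A\|_{op}^p=r_{B(\ell^2(X,\mathcal H))}(A)\le r_{\mathcal S^p_\nu}(A)$ from the subalgebra inclusion, the upper bound via the approximating weights of Lemma \ref{lem:8} together with the polynomial-weight case (Theorem \ref{thm:equivradius}), and then Hulanicki's lemma (Theorem \ref{thm:Hul}) to convert the radius identity into inverse closedness, symmetry, and the spectral coincidence $\sigma_{p\nu}(A)=\sigma(A)$ for arbitrary elements. All of that framing is correct and matches the paper.

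The gap is in the one step that carries the content. You discharge the upper bound by citing Lemma \ref{lem:9} for the clause $r_{\mathcal S^p_{\nu_n}}(A)=r_{\mathcal S^p_\nu}(A)=\|A\|_{op}^p$; but that clause \emph{is} the assertion of Theorem \ref{thm:main1} (the lemma is modelled on \cite[Lemma 9(a)]{GroLei}, whose substance is the norm convergence, while the radius identity is what \cite[Lemma 9(b)]{GroLei} --- and here the theorem itself --- actually proves). As written, your argument is therefore circular at exactly the point you identify as the heart of the matter. What the paper extracts from Lemma \ref{lem:9} is only $\lim_{n\to\infty}\|A^m\|_{\mathcal S^p_{\nu_n}}=\|A^m\|_{\mathcal S^p}$, and the radius identity is then derived by a specific order of limits: for fixed $m$ one has $r_{\mathcal S^p_\nu}(A)^m=r_{\mathcal S^p_{\nu_n}}(A^m)\le\|A^m\|_{\mathcal S^p_{\nu_n}}$ for every $n$ (the equality of radii being your constant-washing observation applied to $A^m$), so letting $n\to\infty$ gives $r_{\mathcal S^p_\nu}(A)^m\le\|A^m\|_{\mathcal S^p}$; taking $m$-th roots and $m\to\infty$ yields $r_{\mathcal S^p_\nu}(A)\le r_{\mathcal S^p}(A)$; finally the weak growth condition \eqref{def:weakgrowthweight} gives $\mathcal S^p_\nu\subset\mathcal S^p_{\nu_\delta}$, so Theorem \ref{thm:equivradius} identifies $r_{\mathcal S^p}(A)$ with $r_{B(\ell^2(X,\mathcal H))}(A)^p=\|A\|_{op}^p$. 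This also resolves the worry in your last paragraph: no uniformity in the power is needed, because for each fixed power one first removes the weight ($n\to\infty$) and only afterwards invokes the spectral radius formula in the unweighted algebra $\mathcal S^p$.
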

\begin{proof}
Using the equivalence of the weights $\omega_n$ from Lemma \ref{lem:8}, for all $A\in\mathcal{S}^p_\omega$ and $m,n\in\mathbb{N}$, we have
\begin{align*}
    r_{\mathcal{S}^p_\omega}(A)^m = r_{\mathcal{S}^p_\omega}(A^m) = r_{\mathcal{S}^p_{\omega_n}} (A^m) \leq \|A^m\|_{\mathcal{S}^p_{\omega_n}}.
\end{align*}
Then, for all $A=A^\ast\in\mathcal{S}^p_\omega$ and $m\in\mathbb{N}$, Lemma \ref{lem:9} implies that
\begin{align*}
    r_{\mathcal{S}^p_\omega}(A)^m \leq \lim_{n\to\infty} \|A^m\|_{\mathcal{S}^p_{\omega_n}} = \|A^m\|_{\mathcal{S}^p}.
\end{align*}
This gives
\begin{align*}
    r_{\mathcal{S}^p_\omega}(A) \leq \lim_{m\to\infty}  \|A^m\|^\frac{1}{m}_{\mathcal{S}^p} = r_{\mathcal{S}^p}(A) \quad \text{for all} \quad A=A^\ast\in\mathcal{S}^p_\omega.
\end{align*}
Now, using that fact that $\omega$ satisfies \eqref{def:weakgrowthweight}, we get $\mathcal{S}^p_\omega \subset \mathcal{S}^p_{\omega_\delta}$ and so, by Theorem \ref{thm:equivradius}, for all $A=A^\ast\in\mathcal{S}^p_\omega$, we have
\begin{align*}
    r_{\mathcal{S}^p_\omega}(A) \leq r_{\mathcal{S}^p}(A) =r_{B(\ell^2(X,\mathcal H))}(A)^p.
\end{align*}
The reverse inequality follows from the fact that $\mathcal{S}^p_\omega\subset B(\ell^2(X,\mathcal{H}))$. 
\end{proof}

\begin{proof} [Proof of Theorem \ref{thm:main Schur}]
The proof follows from Theorem \ref{thm:main1} combined with Theorem \ref{thm:Hul}.
\end{proof}

\section{BGS-type algebras} \label{sec:BGS}

In this section, we deal with the weighted $p$-normed $B(\mathcal{H})$-valued Baskakov-Gohberg-Sj\"ostrand type algebra and for it we take $X=\mathbb{Z}^d$. Recall that
\begin{align*}
\mathcal{C}^p_\omega = \mathcal{C}^p_\omega(\mathbb{Z}^d) = \left\{A=[A_{k,l}]_{k,l}\in\mathbb{Z}^d : A_{k,l} \in B(\mathcal{H}), |A| = \sum_{l\in\mathbb{Z}^d} \left(\sup_{k\in\mathbb{Z}^d} \|A_{k,k-l}\|\right)^p \omega(l)^p<\infty \right\}.
\end{align*}

\begin{proposition} \label{prop:Cpw is algebra}
Let $\omega$ be a submultiplicative weight on $\mathbb{Z}^d$.
\begin{enumerate}
    \item If $0<p\leq 1$, then $\mathcal{C}^p_\omega$ is a $p$-Banach algebra with the $p$-norm $\|\cdot\|_{\mathcal{C}^p_\omega}=|\cdot|$. 
    \item If $1<p<\infty$ and if $\omega$ satisfies $\omega^{-p'}\star\omega^{-p'}\leq\omega^{-p'}$, then $\mathcal{C}^p_\omega$ is a Banach algebra with the norm $\|\cdot\|_{\mathcal{C}^p_\omega}=|\cdot|^\frac{1}{p}$. 
\end{enumerate}
\end{proposition}
\begin{proof}
Again, we just show the submultiplicativity of the ($p$-)norm as the rest is well established in literature. The case of $0<p\leq1$ can be shown using techniques used in Proposition~\ref{prop:Spw is algebra}. Now, let $1<p<\infty$, and let $A,B \in C^p_\omega$. Then for each $l\in\mathbb{Z}^d$, using H\"older's inequality and \eqref{eq:weightalgebra}, we get
\begin{align*}
    &\sup_{k\in\mathbb{Z}^d} \|(AB)_{k,k-l}\| \\ 
    =& \sup_{k\in\mathbb{Z}^d} \left\|\sum_{m \in \mathbb{Z}^d} A_{k,m} B_{m,k-l} \right\| \\
    \leq&  \sup_{k\in\mathbb{Z}^d} \sum_{m \in \mathbb{Z}^d} \|A_{k,m}\| \|B_{m,k-l}\|    \\
    \leq&  \sum_{m \in \mathbb{Z}^d} \sup_{k\in\mathbb{Z}^d} \|A_{k,m}\| \omega(k-m) \|B_{m,k-l}\|\omega(m-k+l) \frac{1}{\omega(k-m) \omega(m-k+l)}   \\
    \leq&  \sum_{m \in \mathbb{Z}^d} \sup_{k\in\mathbb{Z}^d} \|A_{k,m}\| \omega(k-m) \|B_{m,k-l}\|\omega(m-k+l) \frac{1}{\omega(k-m) \omega(m-k+l)}    \\
    \leq& \left( \sum_{m \in \mathbb{Z}^d} \left( \sup_{k\in\mathbb{Z}^d} \|A_{k,m}\| \omega(k-m) \|B_{m,k-l}\|\omega(m-k+l) \right)^p \right)^\frac{1}{p} \\ 
    & \cdot \left( \sum_{m \in \mathbb{Z}^d} \frac{1}{\omega(k-m)^{p'} \omega(m-k+l)^{p'}} \right)^\frac{1}{p'}    \\
    \leq& \left( \sum_{m \in \mathbb{Z}^d} \left( \sup_{k\in\mathbb{Z}^d} \|A_{k,m}\| \omega(k-m) \|B_{m,k-l}\|\omega(m-k+l) \right)^p \right)^\frac{1}{p}  \left( \omega^{-p'}\star\omega^{-p'}(l) \right)^\frac{1}{p'} \\
    \leq& \left( \sum_{m \in \mathbb{Z}^d} \left( \sup_{k\in\mathbb{Z}^d} \|A_{k,m}\| \omega(k-m) \|B_{m,k-l}\|\omega(m-k+l) \right)^p \right)^\frac{1}{p}  \omega^{-1}(l) .
\end{align*}
This gives
\begin{align*}
    \|AB\|_{\mathcal{C}^p_\omega}^p 
    &= \sum_{l\in\mathbb{Z}^d} \left(\sup_{k\in\mathbb{Z}^d} \|(AB)_{k,k-l}\|\right)^p \omega(l)^p \\
    &\leq \sum_{l\in\mathbb{Z}^d} \sum_{m \in \mathbb{Z}^d} \left( \sup_{k\in\mathbb{Z}^d} \|A_{k,m}\| \omega(k-m) \right)^p \left( \sup_{k\in\mathbb{Z}^d} \|B_{m,k-l}\|\omega(m-k+l) \right)^p \omega^{-p}(l) \omega(l)^p \\
    &\leq \|A\|_{\mathcal{C}^p_\omega}^p \|B\|_{\mathcal{C}^p_\omega}^p
\end{align*}
This completes the proof.
\end{proof}

Note that $\mathcal{C}^p_\omega$ is isometrically isomorphic to $\ell^p_\omega(\mathbb{Z}^d,\ell^\infty(\mathbb{Z}^d,B(\mathcal{H})))$ as the ($p$-)norm on it can be written as 
\begin{align} \label{eq:Cvnorm2}
    \|A\|_{\mathcal{C}^p_\omega} = \left\| \left( \|A_{k,k-l}\| )_{k\in\mathbb{Z}^d} \|_{\ell^\infty(\mathbb{Z}^d)} \right)_{l\in\mathbb{Z}^d}  \right\|_{\ell^p_\omega(\mathbb{Z}^d)}.
\end{align}

Let $A=[A_{k,l}]_{k,l\in\mathbb{Z}^d}\in\mathcal{C}^p_\omega$. Define $d_A:\mathbb{Z}^d\to[0,\infty)$ by
\begin{align} \label{def:d_A}
    d_A(l)=\sup_{k\in\mathbb{Z}^d} \|A_{k,k-l}\| \quad (l\in\mathbb{Z}^d).
\end{align}
This map $d_A$ is used to characterize the off-diagonal decay of the matrices. It is easy to see that $d_A\in\ell^p_\omega(\mathbb{Z}^d)$ and $\|d_A\|_{\ell^p_\omega}=\|A\|_{\mathcal{C}^p_\omega}$.

\begin{proposition} \label{prop:Cvbddlinop}
Let $0<p\leq1$, and let $\nu$ be a $\omega$-moderate weight. Then each $A\in\mathcal{C}^p_\omega$ defines a bounded linear map on $B(\ell^q_\nu(\mathbb{Z}^d,\mathcal{H}))$ for all $p\leq q \leq\infty$. 
\end{proposition}
\begin{proof}
The case of $p=1$ is shown in \cite[Prop. 6.1]{KB}. If $p<1$, then $\mathcal{C}^p_\omega\subset\mathcal{C}^1_\omega$ and so the case for $1\leq q\leq\infty$ follows from the case of $p=1$. So, it just remains to show for $p<1$ and $p\leq q<1$. Let $x=(x_l)\in\ell^q_\nu(\mathbb{Z}^d,\mathcal{H})$. For $l\in\mathbb{Z}^d$, define $c_l=\|x_l\|$. Then $c=(c_l)\in\ell^q_\nu(\mathbb{Z}^d)$. Keeping up with the notation in \eqref{def:d_A}, we have
\begin{align*}
    \|[Ax]_k\| \leq \sum_{l\in\mathbb{Z}^d} \|A_{k,l}\| \|x_l\| \leq d_A(k-l) c_l = (d_A\star c)(k) \quad (k\in\mathbb{Z}^d).
\end{align*}
Now, using the fact that $\nu$ is $\omega$-moderate and $p\leq q \leq 1$, we get 
\begin{align*}
    \|Ax\|_{\ell^q_\nu(\mathbb{Z}^d,\mathcal{H})} = \|(\|[Ax]\|)_k\|_{\ell^q_\nu(\mathbb{Z}^d)} 
    &\leq \|d_A\star c\|_{\ell^q_\nu(\mathbb{Z}^d)} \\
    &\leq C \|d_A\|_{\ell^q_\omega(\mathbb{Z}^d)} \|c\|_{\ell^q_\nu(\mathbb{Z}^d)} \\
    &\leq C \|d_A\|_{\ell^p_\omega(\mathbb{Z}^d)} \|c\|_{\ell^q_\nu(\mathbb{Z}^d)}\\
    & = C \|A\|_{\mathcal{C}^p_\omega} \|x\|_{\ell^q_\nu(\mathbb{Z}^d,\mathcal{H})},
\end{align*}
where the constant $C$ is from the relation between $\nu$ and $\omega.$
\end{proof}

\begin{proposition} \label{prop:Cvbddlinop>1}
Let $1<p<\infty$, $\omega$ be a $p$-algebra such that $\sum_{n\in\mathbb{Z}^d} \omega(n)^{-p'}<\infty$, and let $\nu$ be a $\omega$-moderate weight. Then each $A\in\mathcal{C}^p_\omega$ defines a bounded linear map on $B(\ell^q_\nu(\mathbb{Z}^d,\mathcal{H}))$ for all $1\leq q \leq\infty$. 
\end{proposition}
\begin{proof}
The proof follows using arguments as in above Proposition \ref{prop:Cvbddlinop} with minor modifications.
\end{proof}

The above Propositions \ref{prop:Cvbddlinop} and \ref{prop:Cvbddlinop>1} can be proved without using the function $d_A$ as done in Proposition \ref{prop:Cpw is algebra}, but is done so to justify the other name of Baskakov-Gohberg-Sj\"ostrand algebra which is \emph{convolution dominated matrices}.

Next, we state a lemma which characterizes the refined algebra structure of $\mathcal{C}^p_\omega$. Its proof is straightforward and similar to the one given in \cite[Prop. 6.2]{KB} for $p=1$.

\begin{lemma} \label{lem:Cvalgebra}
Let $\omega$ be a symmetric submultiplicative weight on $\mathbb{Z}^d$.
\begin{enumerate}
    \item If $0<p\leq1$, then $\mathcal{C}^p_\omega$ is a unital $p$-Banach $\ast$-algebra.
    \item If $1<p<\infty$ and $\omega$ is a $p$-algebra weight satisfying $\sum_{n\in\mathbb{Z}^d} \omega(n)^{-p'}<\infty$, then $\mathcal{C}^p_\omega$ is a unital Banach $\ast$-algebra.
\end{enumerate}
The involution is as in \eqref{def:matrixinvolution}. In particular, $\mathcal{C}^p_\omega \subset B(\ell^2(\mathbb{Z}^d,\mathcal{H}))$.
\end{lemma}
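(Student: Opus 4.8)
The plan is to regard the $p$-Banach (resp.\ Banach) \emph{algebra} structure of $\mathcal{C}^p_\nu$ as already furnished by the preceding lemma, and to supply the three remaining ingredients: a unit, closedness under the involution \eqref{def:matrixinvolution} with isometry, and the inclusion $\mathcal{C}^p_\nu\subset B(\ell^2(\mathbb{Z}^d,\mathcal H))$. Unitality is immediate: the identity matrix $I=[\delta_{k,l}\,\mathrm{Id}_{\mathcal H}]_{k,l\in\mathbb Z^d}$ has decay function $d_I(l)=\sup_{k}\|\delta_{l,0}\,\mathrm{Id}_{\mathcal H}\|=\delta_{l,0}$, so $\|I\|_{\mathcal C^p_\nu}<\infty$ (it equals $\nu(0)^p$ when $0<p\le1$ and $\nu(0)$ when $p>1$); hence $I\in\mathcal C^p_\nu$ and $I$ is the identity for matrix multiplication.

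For the involution, recall that $(A^\ast)_{k,l}=(A_{l,k})^\ast$ and that taking adjoints is isometric on $B(\mathcal H)$. A direct index computation, using the substitution $j=k-l$, then gives
$$d_{A^\ast}(l)=\sup_{k}\|(A_{k-l,k})^\ast\|=\sup_{k}\|A_{k-l,k}\|=\sup_{j}\|A_{j,\,j-(-l)}\|=d_A(-l).$$
Combining this identity with the \emph{symmetry} of $\nu$, the change of variable $l\mapsto-l$ yields $\|A^\ast\|_{\mathcal C^p_\nu}=\|A\|_{\mathcal C^p_\nu}$; in particular $A^\ast\in\mathcal C^p_\nu$. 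This is precisely the step where the symmetry hypothesis on $\nu$ is used: it is what makes the involution isometric on $\mathcal C^p_\nu$. The involutive identities ($(A^\ast)^\ast=A$, conjugate-linearity, and $(AB)^\ast=B^\ast A^\ast$) hold entrywise by the definition \eqref{def:matrixinvolution} together with the corresponding properties of the adjoint in $B(\mathcal H)$, so $\mathcal C^p_\nu$ is a $p$-Banach (resp.\ Banach) $\ast$-algebra.

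It remains to prove $\mathcal C^p_\nu\subset B(\ell^2(\mathbb Z^d,\mathcal H))$. When $0<p\le1$ this follows at once from Proposition \ref{prop:Cvbddlinop}: the constant weight $m\equiv1$ is $\nu$-moderate (as $\nu\ge1$) and $p\le1\le2$, so every $A\in\mathcal C^p_\nu$ acts boundedly on $\ell^2_{1}(\mathbb Z^d,\mathcal H)=\ell^2(\mathbb Z^d,\mathcal H)$. For $1<p<\infty$ the constant weight is not a $p$-algebra weight, so Proposition \ref{prop:Cvbddlinop>1} does not apply directly and I would instead argue by hand. From $\|[Ax]_k\|\le\sum_l\|A_{k,l}\|\,\|x_l\|\le (d_A\star c)(k)$ with $c=(\|x_l\|)\in\ell^2$, a Schur test (equivalently, Young's inequality) shows that $A$ is bounded on $\ell^2(\mathbb Z^d,\mathcal H)$ with $\|A\|_{op}\le\|d_A\|_{\ell^1}$; thus it suffices to prove $d_A\in\ell^1(\mathbb Z^d)$. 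By Hölder's inequality, $\|d_A\|_{\ell^1}\le\|d_A\|_{\ell^p_\nu}\,\|\nu^{-1}\|_{\ell^{p'}}=\|A\|_{\mathcal C^p_\nu}\big(\sum_l\nu(l)^{-p'}\big)^{1/p'}$, which reduces everything to the summability $\nu^{-p'}\in\ell^1(\mathbb Z^d)$.

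The main obstacle is exactly this last point: extracting $\nu^{-p'}\in\ell^1$ from the $p$-algebra condition $\nu^{-p'}\star\nu^{-p'}\le\nu^{-p'}$. Writing $f=\nu^{-p'}$ and $a_R=\sum_{|n|\le R}f(n)$, I would restrict the pointwise inequality $f\star f\le f$ to the box $B_R=\{|n|\le R\}$ and use $B_{R/2}+B_{R/2}\subseteq B_R$ to obtain $a_{R/2}^2\le\sum_{n\in B_R}(f\star f)(n)\le a_R$; on the other hand, symmetry forces $\nu\ge1$, hence $f\le1$ and $a_R\le|B_R|\le CR^d$. If $a_{R_0}>1$ held for some $R_0$, iterating $a_R^2\le a_{2R}$ would make $a_{2^kR_0}\ge a_{R_0}^{2^k}$ grow doubly exponentially in $k$, contradicting the polynomial bound $a_{2^kR_0}\le C(2^kR_0)^d$. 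Therefore $a_R\le1$ for all $R$, so $\nu^{-p'}\in\ell^1(\mathbb Z^d)$ with $\|\nu^{-p'}\|_{\ell^1}\le1$, which completes the embedding and hence the proof. The unit and involution steps are routine; this summability estimate is the only place that requires genuine work in the range $p>1$.
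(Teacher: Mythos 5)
Your proof is correct, and since the paper itself gives no argument for this lemma (it only points to \cite[Prop.\ 6.2]{KB} and calls the proof straightforward), your write-up is more complete than what is on the page. The unit, the identity $d_{A^\ast}(l)=d_A(-l)$, and the use of the symmetry of $\nu$ to make the involution isometric are exactly the routine steps the authors have in mind, and your use of Proposition \ref{prop:Cvbddlinop} with $m\equiv 1$ for the case $0<p\le 1$ is the intended route for the embedding.

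Where you genuinely depart from (and improve on) the paper is the embedding for $1<p<\infty$. You correctly notice that Proposition \ref{prop:Cvbddlinop>1} cannot be invoked with $m\equiv 1$, since the constant weight is not a $p$- algebra weight, and you instead reduce the embedding to $\nu^{-p'}\in\ell^1(\mathbb Z^d)$ via Young's inequality and H\"older. Your doubling argument --- $a_{R/2}^2\le\sum_{n\in B_R}(f\star f)(n)\le a_R$ together with the polynomial bound $a_R\le C R^d$ forced by $\nu\ge 1$ --- is a correct and self-contained proof that the $p$- algebra condition \eqref{eq:weightalgebra} already implies $\sum_{n\in\mathbb Z^d}\nu(n)^{-p'}\le 1$. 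This is a nontrivial observation: the paper assumes $\sum_{n\in\mathbb Z^d}\nu(n)^{-p'}<\infty$ as an explicit extra hypothesis in Theorem \ref{thm:Cv}, and your argument shows that hypothesis is redundant. The only cost of your approach is that it silently uses $|B_R\cap\mathbb Z^d|\lesssim R^d$ and $\nu\ge 1$ (both available here from symmetry and submultiplicativity), but these are exactly the facts the paper records in Section \ref{sec:pre}, so nothing is missing.
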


Now we are ready for the proof of Theorem \ref{thm:main Cv}. Its proof is obtained using (non-commutative) $p$-Banach algebra valued weighted analogue of Wiener's theorem. A similar result constructing Fourier series using resolution of identity for $B(\mathcal{X})$-valued function is obtained by Baskakov in \cite{Ba} and its $p$-power analogues are obtained in \cite{kbwp}, where $\mathcal{X}$ is a Banach space. The case of $p=1$ is obtained in \cite{KB} and we follow its line of arguments. 

% \begin{theorem} 
% Let $0<p<\infty$, and let $\omega$ be an admissible weight on $\mathbb{Z}^d$ which is a $p$-algebra weight and satisfies $\sum_{n\in\mathbb{Z}^d} \omega(n)^{-p'}<\infty$ if $p>1$. Then $\mathcal{C}^p_\omega$ is inverse-closed in $B(\ell^2(\mathbb{Z}^d,\mathcal{H}))$. In particular, $\mathcal{C}^p_\omega$ is symmetric.
% \end{theorem}
\begin{proof} [Proof of Theorem \ref{thm:main Cv}]
    For $t\in\mathbb{R^d}$, by $M_t\in B(\ell^2(\mathbb{Z}^d,\mathcal{H}))$ denote the \textit{modulation operator} which is defined as a diagonal matrix by 
\begin{align*}
    M_t=\text{diag}[e^{2\pi ik\cdot t} \mathcal{I}_{B(\mathcal{H})}]_{k\in\mathbb{Z}^d}.
\end{align*}
Given $x=(x_l)_l\in\ell^2(\mathbb{Z}^d,\mathcal{H})$, $M_t$ operates on it as $M_t(x)=(e^{2\pi il\cdot t}x_l)_{l\in\mathbb{Z}^d}$. Next, we define a map $M:\mathbb{T}^d\to B(\ell^2(\mathbb{Z}^d,\mathcal{H}))$ by $M(e^{it})=M_t$. Then $M$ is a unitary representation of $\mathbb{T}^d$, the dual group of $\mathbb{Z}^d$. For $A=[A]_{k,l\in\mathbb{Z}^d}\in B(\ell^2(\mathbb{Z}^d,\mathcal{H}))$, define a 1-periodic $B(\ell^2(\mathbb{Z}^d,\mathcal{H}))$-valued function $f_A$ by
\begin{align*}
    f_A(t)=M_tAM_{-t} \quad (t\in\mathbb{R}^d).
\end{align*}
Then $f_A$ can be viewed as a $B(\ell^2(\mathbb{Z}^d,\mathcal{H}))$-valued function on $\mathbb{T}^d$. The canonical matrix representation of $f_A(t)$ is given by
\begin{align*}
    \mathbb{M}(f_A(t))=[A_{k,l} e^{2\pi i(k-l)\cdot t}]_{k,l\in\mathbb{Z}^d}.
\end{align*}
Observe that if $A\in\mathcal{C}^p_\omega$, then $f_A$ has operator valued Fourier series representation given by
\begin{align} \label{def:Fourier series of F_A}
    f_A(t)=\sum_{n\in\mathbb{Z}^d} D_A(n) e^{2\pi in\cdot t} \quad (t\in \mathbb{R}^d),
\end{align}
where $D_A(n)$ is the $n$-th side diagonal of $A$, that is,
\begin{align} \label{def:D_A}
    [D_A(n)]_{k,l}=\begin{cases}
        A_{k,l}, & \text{if} \ \ l=k-n \\ 0, & \text{if} \ \ l\neq k-n
    \end{cases}.
\end{align}
The above means that the Fourier coefficients of $f_A$ coincide with $D_A$, that is, $\widehat f_A(n)=D_A(n)$ for all $n\in\mathbb{Z}^d$.

If $0<p\leq1$, then using $\omega\geq1$ along with \eqref{eq:ineqp<1}, we get 
\begin{align*}
    \|f_A(t)\|_{B(\ell^2(\mathbb{Z}^d,\mathcal{H}))} \leq \sum_{n\in\mathbb{Z}^d} \|D_A(n)\|_{B(\ell^2(\mathbb{Z}^d,\mathcal{H}))} = \sum_{n\in\mathbb{Z}^d} d_A(n) \leq \left( \sum_{n\in\mathbb{Z}^d} d_A(n)^p \omega(n)^p \right)^\frac{1}{p} 
    = \|A\|_{\mathcal{C}^p_\omega}^\frac{1}{p},
\end{align*}
and from \eqref{def:d_A}, \eqref{def:Fourier series of F_A} and \eqref{def:D_A}, it follows that
\begin{align*}
    \|\widehat f_A\|_{\ell^p_\omega(\mathbb{Z}^d,B(\ell^2(\mathbb{Z}^d,\mathcal{H})))} 
    = \sum_{n\in\mathbb{Z}^d} \|D_A(n)\|^p_{B(\ell^2(\mathbb{Z}^d,\mathcal{H}))} \omega(n)^p 
    = \sum_{n\in\mathbb{Z}^d} d_A(n)^p \omega(n)^p 
     = \|A\|_{\mathcal{C}^p_\omega}.
\end{align*}
Also, if $1<p<\infty$, then using H\"olders inequality and the fact that $\omega$ is a $p$-algebra weight, we get
\begin{align*}
\|f_A(t)\|_{B(\ell^2(\mathbb{Z}^d,\mathcal{H}))} &\leq \sum_{n\in\mathbb{Z}^d} \|D_A(n)\|_{B(\ell^2(\mathbb{Z}^d,\mathcal{H}))} \\
    &= \sum_{n\in\mathbb{Z}^d} d_A(n) \omega(n) \omega(n)^{-1} \\ 
    &\leq \left( \sum_{n\in\mathbb{Z}^d} d_A(n)^p \omega(n)^p \right)^\frac{1}{p} \left( \sum_{n\in\mathbb{Z}^d} \omega(n)^{-p'}  \right)^\frac{1}{p'} \\
    &\leq \|A\|_{\mathcal{C}^p_\omega}^\frac{1}{p} \left( \sum_{n\in\mathbb{Z}^d} \omega(n)^{-p'}  \right)^\frac{1}{p'}.
\end{align*}
and again from \eqref{def:d_A}, \eqref{def:Fourier series of F_A} and \eqref{def:D_A}, we have
\begin{align*}
    \|\widehat f_A\|^p_{\ell^p_\omega(\mathbb{Z}^d,B(\ell^2(\mathbb{Z}^d,\mathcal{H})))} 
    = \sum_{n\in\mathbb{Z}^d} \|D_A(n)\|^p_{B(\ell^2(\mathbb{Z}^d,\mathcal{H}))} \omega(n)^p 
    = \sum_{n\in\mathbb{Z}^d} d_A(n)^p \omega(n)^p 
    = \|A\|_{\mathcal{C}^p_\omega}^p.
\end{align*}
Thus, in both the cases, it follows that the Fourier series of $f_A$ converges absolutely ensuring $f_A$ is well defined and $f_A\in\ell^p_\omega(\mathbb{Z}^d,B(\ell^2(\mathbb{Z}^d,\mathcal{H})))$ provided $A\in\mathcal{C}^p_\omega$.

Now, let $A\in \mathcal{C}^p_\omega$ be such that $A^{-1}\in B(\ell^2(\mathbb{Z}^d,\mathcal{H}))$. Then, for all $t\in\mathbb{T}^d$, $f_A(t)$ is invertible in $B(\ell^2(\mathbb{Z}^d,\mathcal{H}))$ with inverse given by $f_A(t)^{-1}=f_{A^{-1}}(t)=M_tA^{-1}M_{-t}$. Taking $\mathcal{A}=B(\ell^2(\mathbb{Z}^d,\mathcal{H}))$ in Theorem \ref{thm:Wiener}, it follows that $f_A$ is invertible with $(f_A)^{-1}\in\ell^p_\omega(\mathbb{Z}^d,B(\ell^2(\mathbb{Z}^d,\mathcal{H})))$. Also, observe that $(f_A)^{-1}$ is of the form 
\begin{align*}
    (f_A)^{-1}(t)=f_{A^{-1}}(t)=\sum_{n\in\mathbb{Z}^d} D_{A^{-1}}(n) e^{2\pi in\cdot t} \quad (t\in\mathbb{R}^d).
\end{align*}
This with the arguments from above gives $A^{-1}\in\mathcal{C}^p_\omega$. This completes the proof.
\end{proof}

\begin{remark}
Theorem \ref{thm:main Cv} is not true if $\omega$ is not admissible. We shall give an example for $d=1$. Consider the weight $\omega(n)=e^{|n|}$ $(n\in\mathbb{Z})$ that does not satisfy the GRS-condition as $\displaystyle \lim_{n\to\infty} \omega(n)^\frac{1}{n}=e$ and therefore not an admissible weight. Note that the function $f(z)=2-z$ for $z\in\mathbb{T}$ is continuous and nowhere zero function, which implies that $f^{-1}$ exists and it is a continuous function on $\mathbb{T}$. Also, $\widehat{f}\in\ell^1_\omega(\mathbb{Z})$ but $\widehat{f^{-1}}$ is not in $\ell^1_\omega(\mathbb{Z})$. This then gives that $\mathcal{C}^1_\omega$ is not inverse-closed in $B(\ell^2(\mathbb{Z},\mathcal{H}))$.     
\end{remark}

\begin{remark}
The condition that the weight is symmetric can be dropped for BGS-type algebra as it is not required in Theorem \ref{thm:Wiener}, while it is necessary for Schur-type algebra as Hulanicki's lemma Theorem \ref{thm:Hul} demands it.
\end{remark}

In view of above lemmas, we have the following corollary.

\begin{corollary} 
Let $0<p<\infty$, and let $\omega$ be a submultiplicative weight on $\mathbb{Z}^d$ such that for $p>1$, $\omega$ satisfies $\omega^{-p'}\star\omega^{-p'}\leq\omega^{-p'}$ and $\sum_{n\in\mathbb{Z}^d} \omega(n)^{-p'}<\infty$. Then the algebra $\mathcal{C}^p_\omega$ is symmetric if and only if $\omega$ satisfies the GRS-condition.
\end{corollary}

\begin{remark}
Note that Theorem \ref{thm:main Cv}  and above corollary even follows by replacing the Hilbert space $\mathcal{H}$ by some unital Banach space $\mathcal{A}$ and in that case $\ell^2(X,\mathcal{A})$ will be just a Banach space and not a Hilbert space. But it causes no problem as the Wiener's theorem (Theorem \ref{thm:Wiener}) is true even for a unital Banach algebra.
\end{remark}

\section{Gel'fand space of $\ell^p_\omega(\mathbb{Z^\mathbb{N}})$} \label{sec:Gelfand space}
%A \emph{complex homomorphism} on $\mathcal{X}$ is a nonzero multiplicative linear map. % $\varphi:\mathcal{X} \to \mathbb C$ that satisfies $\varphi(ab)=\varphi(a)\varphi(b)\;(a,b \in \mathcal{X})$. 
We briefly give the definition of the Gel'fand space. Let $\mathcal{X}$ be a commutative $p$-Banach algebra, and let $\Delta(\mathcal{X})$ be the collection of all nonzero multiplicative linear maps $\varphi:\mathcal{A}\to\mathbb{C}$. For each $a \in \mathcal{X}$, define $\widehat a:\Delta(\mathcal{X})\to \mathbb{C}$ by $\widehat a(\varphi)=\varphi(a)\;(\varphi\in \Delta(\mathcal{X}))$. The smallest topology on $\Delta(\mathcal{X})$ for which $\widehat a$ is continuous, for all $a\in \mathcal{X}$, is the \emph{Gel'fand topology} on $\Delta(\mathcal{X})$, and $\Delta(\mathcal{X})$ with the Gel'fand topology is the \emph{Gel'fand space} of $\mathcal{X}$. For more details, refer to \cite{GRS, ze}.

Let $\mathbb{Z^N}$ be the space of all finitely supported sequences of integers. An element $\alpha$ of $\mathbb{Z}^\mathbb{N}$ is of the form 
\begin{align*}
    \alpha=(\alpha_1,\alpha_2,\dots,\alpha_n,0,0,0,\dots), \quad \text{where} \quad n\in\mathbb{N} \quad \text{and} \quad \alpha_i\in\mathbb{Z} \quad (1\leq i \leq n).
\end{align*} 
For each nonzero $\alpha\in\mathbb{Z}^\mathbb{N}$, there is $n_\alpha\in\mathbb{N}$ such that $\alpha_{n_\alpha}\neq0$ and $\alpha_k=0$ for all $k>n_\alpha$.

A weight on $\mathbb{Z^N}$ is a map $\omega:\mathbb{Z^N}\to[1,\infty)$ that satisfies 
$$\omega(\alpha+\beta)\leq\omega(\alpha) + \omega(\beta) \quad (\alpha,\beta\in\mathbb{Z^N}).$$
A weight on $\mathbb{Z^N}$ is \textit{admissible} if it satisfies the GRS-condition: $\displaystyle \lim_{n\to\infty}\omega(n\alpha)^\frac{1}{n}=1$ for all $\alpha\in\mathbb{Z^N}$. We do not assume $\omega$ to be symmetric here.

For $0<p\leq1$ and a weight $\omega$ on $\mathbb{Z^N}$, let
$$\ell^p_\omega(\mathbb{Z^N})=\ell^p_\omega(\mathbb{Z^N},\mathbb{C})=\left\{f=(f_\alpha)_{\alpha\in \mathbb{Z^N}}:f_\alpha \in \mathbb{C}, \|f\|_{\ell^p_\omega}=\sum_{\alpha\in \mathbb{Z^N}}\|f_\alpha\|^p\omega(\alpha)^p <\infty\right\}.$$
Then $\ell^p_\omega(\mathbb{Z^N})$ is a unital $p$-Banach algebra with the $p$-norm $\|\cdot\|_{\ell^p_\omega}$ and convolution defined as
$$(f\star g)(\alpha)=\sum_{\beta\in\mathbb{Z^N}} f(\alpha-\beta)g(\beta)  \quad (\alpha\in \mathbb{Z^N}, \,\, f,g\in\ell^p_\omega(\mathbb{Z^N})).$$

\subsection{Identification of Gel'fand space}
In \cite{AT}, it is claimed that if $\omega$ is a weight on $\mathbb{Z}^\mathbb{N}$, then the Gel'fand space of $\ell^1_\omega(\mathbb{Z}^\mathbb{N})$ is homeomorphic to the countable product of annulus $\Gamma(\rho_{1,k},\rho_{2,k})=\{z\in\mathbb{C}: \rho_{1,k}\leq|z|\leq\rho_{2,k}\}$, where
\begin{align*}
    \rho_{1,k} = \sup_{\alpha_k<0} \omega(\alpha_k)^\frac{1}{\alpha_k} \quad \text{and} \quad 
    \rho_{2,k} = \inf_{\alpha_k>0} \omega(\alpha_k)^\frac{1}{\alpha_k}
\end{align*}
with $\omega(\alpha_k)=\omega(0,0,0,\dots,0,\alpha_k,0,\dots)$ and $\alpha_k$ at $k^{th}$ position. Here, we show that it is not true by giving an example. Before doing so, we first give an identification of $\Delta(\ell^p_\omega(\mathbb{Z}^\mathbb{N}))$, for $0<p\leq1$, with a subset of $(\mathbb{C}^\times)^\infty$, the countable product of $\mathbb{C}^\times=\mathbb{C}\setminus\{0\}$. 

For $\alpha=(\alpha_1,\alpha_2,\alpha_3,\dots)\in\mathbb{Z}^\mathbb{N}$ and $z=(z_1,z_2,z_3,\dots)\in(\mathbb{C}^\times)^\infty$, define $$z^\alpha=z_1^{\alpha_1}z_2^{\alpha_2}z_3^{\alpha_3}\cdots.$$ 
Since $\alpha$ has finite length, this product is well defined; indeed,
$$z^\alpha=z_1^{\alpha_1}z_2^{\alpha_2}z_3^{\alpha_3} \cdots z_{n_\alpha}^{\alpha_{n_\alpha}} \quad \text{and} \quad |z^\alpha|=|z_1|^{\alpha_1} |z_2|^{\alpha_2} |z_3|^{\alpha_3} \cdots |z_{n_\alpha}|^{\alpha_{n_\alpha}}.$$

\begin{theorem}
Let $0<p\leq 1$, and let $\omega$ be a weight on $\mathbb{Z}^\mathbb{N}$. Then $\Delta(\ell^p_\omega(\mathbb{Z}^\mathbb{N}))$ is homeomorphic to the set 
\begin{align*}
    \Gamma_\omega=\left\{z\in\mathbb{(C^\times)^\infty}: \prod_{i=1}^{n_\alpha} |z_i|^{\alpha_i} \leq \omega(\alpha) \,\, \text{for all} \,\, \alpha\in \mathbb{Z}^\mathbb{N}\right\}.
\end{align*}
\end{theorem}
\begin{proof}
For $i\in\mathbb{N}$, let $e_i$ be an element of $\mathbb{Z^N}$ with $1$ at $i^{th}$ position and zero elsewhere, and for $\alpha=(\alpha_1,\alpha_2,\dots,\alpha_{n_\alpha},0,0,\dots)\in\mathbb{Z^N}$, let $\delta_\alpha\in \ell^p_\omega(\mathbb{Z}^\mathbb{N})$ be the function defined as $\delta_\alpha(\alpha)=1$ and zero otherwise. Observe that 
\begin{align*}
    \delta_\alpha = \delta_{(1,1,1,\dots)}^\alpha = \delta_{e_1}^{\alpha_1} \star \delta_{e_2}^{\alpha_2} \star \cdots \star \delta_{e_{n_\alpha}}^{\alpha_{n_\alpha}}.
\end{align*}

Let $\varphi\in\Delta(\ell^p_\omega(\mathbb{Z}^\mathbb{N}))$. For each $i\in\mathbb{N}$, let $z_i=\varphi(\delta_{e_i})\in\mathbb{C}$. Since $\delta_\mathbf{0}$, for $\mathbf{0}=(0,0,0,\dots)$, is the unit element of $\ell^p_\omega(\mathbb{Z}^\mathbb{N})$, $1=\phi(\delta_\mathbf{0})=\phi(\delta_{e_i}\star\delta_{-e_i})=\phi(\delta_{e_i}) \phi(\delta_{-e_i})$ for all $i\in\mathbb{N}$. So, $z=(z_1,z_2,z_3,\dots)\in(\mathbb{C}^\times)^\infty$.
Now, let $f\in \ell^p_\omega(\mathbb{Z}^\mathbb{N})$. Then $f$ can be written as $f=\sum_{\alpha\in\mathbb{Z}^\mathbb{N}} f(\alpha)\delta_\alpha$ and so 
\begin{align*} 
\varphi(f) =\varphi\left(\sum_{\alpha\in\mathbb{Z}^\mathbb{N}} f(\alpha)\delta_\alpha \right) 
&= \sum_{\alpha\in\mathbb{Z}^\mathbb{N}} f(\alpha)\varphi(\delta_\alpha) \\
&= \sum_{\alpha\in\mathbb{Z}^\mathbb{N}} f(\alpha)\varphi(\delta_{e_1}^{\alpha_1} \star \delta_{e_2}^{\alpha_2} \star \cdots \star \delta_{e_{n_\alpha}}^{\alpha_{n_\alpha}}) \\ 
&= \sum_{\alpha\in\mathbb{Z}^\mathbb{N}} f(\alpha)\varphi(\delta_{e_1}^{\alpha_1}) \varphi(\delta_{e_2}^{\alpha_2}) \cdots \varphi(\delta_{e_{n_\alpha}}^{\alpha_{n_\alpha}}) \\ 
&= \sum_{\alpha\in\mathbb{Z}^\mathbb{N}} f(\alpha) z_1^{\alpha_1} z_2^{\alpha_2} \cdots z_{n_\alpha}^{\alpha_{n_\alpha}} \\ 
&=\sum_{\alpha\in\mathbb{Z}^\mathbb{N}} f(\alpha) z^\alpha. 
\end{align*}
Thus, each $\varphi\in\Delta(\ell^p_\omega(\mathbb{Z}^\mathbb{N}))$ is of the form $\varphi_z$ for some $z\in(\mathbb{C}^\times)^\infty$. 

Let $z\in\Gamma_\omega$. If $f\in \ell^p_\omega(\mathbb{Z}^\mathbb{N})$, then
\begin{align*}
    |\varphi_z(f)|
    \leq \sum_{\alpha\in\mathbb{Z}^\mathbb{N}} |f(\alpha)| |z^\alpha| 
    &= \sum_{\alpha\in\mathbb{Z}^\mathbb{N}} |f(\alpha)| |z_1|^{\alpha_1} |z_2|^{\alpha_2} \cdots |z_{n_\alpha}|^{\alpha_{n_\alpha}} \\
    &\leq \left( \sum_{\alpha\in\mathbb{Z}^\mathbb{N}} |f(\alpha)| \omega(\alpha) \right)^\frac{p}{p} \\
    &\leq \|f\|_{\ell^p_\omega(\mathbb{Z}^\mathbb{N})}^\frac{1}{p}.
\end{align*}
So, $\varphi_z$ is well-defined, and it is easy to verify that $\varphi_z$ is a nonzero multiplicative linear map.

Conversely, if $z\in(\mathbb{C}^\times)^\infty$ is such that $\varphi_z\in\Delta(\ell^p_\omega(\mathbb{Z}^\mathbb{N}))$, then for each $\alpha\in\mathbb{Z^N}$,  we have $|z^\alpha|=|\varphi_z(\delta_\alpha)|\leq\|\delta_\alpha\|_{\ell^p_\omega(\mathbb{Z}^\mathbb{N})}^\frac{1}{p} = \omega(\alpha)$, that is, $z\in\Gamma_\omega$.
\end{proof}

Now, we give an example of weight for which $\Delta(\ell^p(\mathbb{Z}^\mathbb{N}))\neq\prod_{k\in\mathbb{N}} \Gamma(\rho_{1,k},\rho_{2,k})$.

\begin{example}
Define a weight $\omega$ on $\mathbb{Z}^\mathbb{N}$ by 
\begin{align*}
    \omega(\alpha)=e^{|\alpha_1|} + e^{|\alpha_2|} + \dots + e^{|\alpha_{n_\alpha}|} \quad (\alpha\in\mathbb{Z}^\mathbb{N}).
\end{align*}
Then, for each $k\in\mathbb{N}$,  
\begin{align*}
    &\rho_{1,k} = \sup_{\alpha_k<0} \omega(\alpha_k)^\frac{1}{\alpha_k} =\sup_{\alpha_k<0} \left(k-1+e^{-\alpha_k} \right)^\frac{1}{\alpha_k} = \frac{1}{e} \\
    \text{and} \quad 
    &\rho_{2,k} = \inf_{\alpha_k>0} \omega(\alpha_k)^\frac{1}{\alpha_k} =\inf_{\alpha_k>0} \left(k-1+e^{\alpha_k} \right)^\frac{1}{\alpha_k} = e.
\end{align*}
Take $z=(e,e,e,\dots)\in\prod_{k\in\mathbb{N}} \Gamma(\rho_{1,k},\rho_{2,k})$. For $\alpha=(1,1,0,0,...)\in\mathbb{Z}^\mathbb{N}$, we have 
\begin{align*}
    |z_1|^{\alpha_1} |z_2|^{\alpha_2} = e^2 > 2e = e^{|\alpha_1|} + e^{|\alpha_2|} = \omega(\alpha)
\end{align*}
and so, $\varphi_z$ is not in $\Delta(\ell^p_\omega(\mathbb{Z}^\mathbb{N}))$.    
\end{example}

But we have the following result which states that the Gel'fand space is contained in the said product of annuli. 

\begin{proposition} \label{prop:gelfand space of lp(Z^N)}
Let $0<p\leq1$, and let $\omega$ be a weight on $\mathbb{Z}^\mathbb{N}$. Then 
$$\Delta(\ell^p(\mathbb{Z}^\mathbb{N})) = \Gamma_\omega \subset \prod_{k\in\mathbb{N}} \Gamma(\rho_{1,k},\rho_{2,k}).$$ 
Moreover, if $\omega(\alpha)=\prod_{i=1}^{n_\alpha}\omega_i(\alpha_i)$ $(\mathbf{0}\neq\alpha\in\mathbb{Z}^\mathbb{N})$ and $\omega(\mathbf0)=1$, where $\omega_i$ is a weight on $\mathbb{Z}$ for all $i\in\mathbb{N}$, then $\Gamma_\omega = \prod_{k\in\mathbb{N}} \Gamma(\rho_{1,k},\rho_{2,k}).$  
In particular, if $\omega$ satisfies the GRS-condition, then $\Gamma_\omega=\mathbb{T}^\infty= \prod_{k\in\mathbb{N}} \Gamma(\rho_{1,k},\rho_{2,k})$, where $\mathbb{T}^\infty=\{(z_1,z_2,z_3,\dots)\in\mathbb{C}^\infty:|z_i|=1\ \text{for all}\ i\in\mathbb{N}\}$.
\end{proposition}
\begin{proof}
Let $\varphi_z\in\Gamma_\omega$. Then $\prod_{j=1}^{n_\alpha} |z_j|^{\alpha_j}\leq\omega(\alpha)$. So, taking $\alpha=(0,0,\dots,0,\alpha_k,0,\dots)$, for each $k\in\mathbb{N}$, we get $|z_k|^{\alpha_k}\leq\omega(\alpha_k)$. So, $\omega(-\alpha_k)^\frac{1}{-\alpha_k}\leq|z_k|\leq\omega(\alpha_k)^\frac{1}{\alpha_k}$ for all $\alpha_k\in\mathbb{N}$. This implies that $z_k\in\Gamma(\rho_{1,k},\rho_{2,k})$ for each $k\in\mathbb{N}$. Thus, $\Gamma_\omega \subset \prod_{k\in\mathbb{N}} \Gamma(\rho_{1,k},\rho_{2,k})$.

Let $\omega$ be such that $\omega(\mathbf{0})=1$ and $\omega(\alpha)=\prod_{i=1}^{n_\alpha}\omega_i(\alpha_i)$ for nonzero $\alpha\in\mathbb{Z}^\mathbb{N}$, where $\omega_i$ is a weight on $\mathbb{Z}$ for all $i\in\mathbb{N}$, and let $z\in \prod_{k\in\mathbb{N}} \Gamma(\rho_{1,k},\rho_{2,k})$. Then, for each $k\in\mathbb{N}$, taking $\alpha=(0,0,\dots,0,\alpha_k,0,\dots)$, we have $\omega(-\alpha_k)^\frac{1}{-\alpha_k}\leq|z_k|\leq\omega(\alpha_k)^\frac{1}{\alpha_k}$, that is, 
\begin{align*}
    \omega_k(-\alpha_k)^\frac{-1}{\alpha_k} \prod_{j=1}^{k-1} \omega_j(0)^\frac{-1}{\alpha_k} \leq|z_k|\leq\omega_k(\alpha_k)^\frac{1}{\alpha_k} \prod_{j=1}^{k-1} \omega_j(0)^\frac{1}{\alpha_k}
\end{align*} 
for all $\alpha_k\in\mathbb{N}$. This gives $\displaystyle |z_k|^{\alpha_k} \leq \omega_k(\alpha_k) \prod_{\substack{i=1}}^{k-1} \omega_i(0)$ for all $\alpha_k\in\mathbb{Z}$ and $k\in\mathbb{N}$. Now, let $\alpha=(\alpha_1,\alpha_2,\dots,\alpha_n,0,0,\dots)$ with $n=n_\alpha$. Then, 
\begin{align*}
    \prod_{j=1}^n |z_j|^{\alpha_j} \leq \left(\prod_{j=1}^n \omega_j(\alpha_j)\right) \left( \prod_{\substack{i=1}}^{n} \omega_i(0)^{k-1} \right) = \left( \prod_{\substack{i=1}}^{n} \omega_i(0)^{k-1} \right) \omega(\alpha).
\end{align*} 
For each $m\in\mathbb{N}$, using submultiplicativity of weight, we get
\begin{align*}
    \left(\prod_{j=1}^n |z_j|^{\alpha_j}\right)^m = \prod_{j=1}^n |z_j|^{m\alpha_j} 
    \leq  \left( \prod_{\substack{i=1}}^{n} \omega_i(0)^{k-1} \right) \omega(m\alpha)
    \leq  \left( \prod_{\substack{i=1}}^{n} \omega_i(0)^{k-1} \right) \omega(\alpha)^m.
\end{align*} 
This implies that $\prod_{j=1}^n |z_j|^{\alpha_j} \leq  \left( \prod_{\substack{i=1}}^{n} \omega_i(0)^{k-1} \right)^\frac{1}{m} \omega(\alpha)$ for all $m\in\mathbb{N}$. Thus, $\prod_{j=1}^n |z_j|^{\alpha_j} \leq \omega(\alpha)$. Hence, $z\in\Gamma_\omega$. 

The particular case follows from the fact that if $\omega$ satisfies the GRS-condition, then $\rho_{1,k}=1=\rho_{2,k}$ for all $k\in\mathbb{N}$, and so $\prod_{k\in\mathbb{N}} \Gamma(\rho_{1,k},\rho_{2,k})=\mathbb{T}^\infty$.
\end{proof}

\subsection{Infinite variable version for BGS-type algebras}
Now, we move to the infinite variable analogue for inverse-closedness of BGS-type algebras. In fact, it is sufficient to state the vector-valued weighted analogue Wiener's theorem for infinite variables as rest of the analysis follows same line of arguments as given in Section \ref{sec:BGS}. For that, we first establish the requisite spaces.

Let $\mathcal{A}$ be a unital Banach algebra. For $0<p<\infty$ and a weight $\omega$ on $\mathbb{Z^N}$, define $\ell^p_\omega(\mathbb{Z^N},\mathcal{A})$ analogously. For $p>1$, further assume that $\omega$ satisfies
\begin{align} \label{eqn:algebra weight Z^N}
    \omega^{-p'}\star\omega^{-p'}\leq\omega^{-p'} \quad \text{and} \quad \sum_{\alpha\in\mathbb{Z^N}} \omega(\alpha)^{-p'}<\infty.
\end{align}
Then $\ell^p_\omega(\mathbb{Z}^\mathbb{N},\mathcal{A})$ is a Banach algebra. Let $\mathcal{A}_{p\omega}$ be the collection of all functions $F:\mathbb{T}^\infty\to\mathcal{A}$ such that it has the following representation
\begin{align*}
    F(z)=\sum_{\alpha\in\mathbb{Z^N}} f(n) z^\alpha \quad (z\in\mathbb{T}^\infty),
\end{align*}
where $f\in\ell^p_\omega(\mathbb{Z^N},\mathcal{A})$. Then we have the following theorem.

\begin{theorem} \cite[Theorem 4.1]{kbmul} \label{thm:thi, p<1}
Let $0<p\leq1$, $\mathcal{A}$ be a unital Banach algebra, $\omega$ be an admissible weight on $\mathbb{Z}^\mathbb{N}$, and let $F\in \mathcal{A}_{p\omega}$ be such that $F(z)$ is invertible in $\mathcal{A}$ for all $z\in\mathbb{T}^\infty$. Then $F$ is invertible in $\mathcal{A}_{p\omega}$.
\end{theorem}

Moreover, using the same techniques as in the above theorem and the particular case of the Proposition \ref{prop:gelfand space of lp(Z^N)}, we have the following theorem.

\begin{theorem} \label{thm:thi, p>1}
Let $1<p<\infty$, $\mathcal{A}$ be a unital Banach algebra, $\omega$ be an admissible weight on $\mathbb{Z}^\mathbb{N}$ satisfying \eqref{eqn:algebra weight Z^N}, and let $F\in \mathcal{A}_{p\omega}$ be such that $F(z)$ is invertible in $\mathcal{A}$ for all $z\in\mathbb{T}^\infty$. Then $F$ is invertible in $\mathcal{A}_{p\omega}$.
\end{theorem}

Let $\omega$ be a weight on $\mathbb{Z^N}$. For $0<p<\infty$, define the space $\mathcal{C}^p_\omega(\mathbb{Z^N})$ as the collection of all $B(\mathcal{H})$-valued matrices $A=[A_{k,l}]_{k,l\in\mathbb{Z^N}}$ that satisfies
\begin{align} 
    |A| = \sum_{\beta\in\mathbb{Z^N}} \left(\sup_{\alpha\in\mathbb{Z^N}} \|A_{\alpha,\alpha-\beta}\|\right)^p \omega(\beta)^p<\infty.
\end{align}
Then it is easy to verify that $\mathcal{C}^p_\omega(\mathbb{Z^N})$ is a $p$-Banach algebra for $0<p\leq1$. If $p>1$, then further assume $\omega$ satisfies \eqref{eqn:algebra weight Z^N}, and in that case $\mathcal{C}^p_\omega(\mathbb{Z^N})$ is a Banach algebra. We have the following theorem using above two theorems in place of Theorem \ref{thm:Wiener}.

\begin{theorem}
Let $0<p<\infty$, and let $\omega$ be a weight on $\mathbb{Z^N}$ such that for $p>1$, $\omega$ satisfies \eqref{eqn:algebra weight Z^N}. Then the algebra $\mathcal{C}^p_\omega(\mathbb{Z^N})$ is inverse-closed in $B(\ell^2(\mathbb{Z^N},\mathcal{H}))$ if and only if $\omega$ is admissible. In particular, $\mathcal{C}^p_\omega(\mathbb{Z^N})$ is symmetric if and only if $\omega$ is admissible.
\end{theorem}

\section*{Acknowledgment} The first author is grateful to the National Board for Higher Mathematics (NBHM), India, for the research grant (02011/39/2025/NBHM(R. P.)/R$\&$D II/16090). The second author gratefully acknowledges the Post Doctoral Fellowship under the ISIRD project 9--551/2023/IITRPR/10229 IIT Ropar. This work was partially supported by the FIST program of the Department of Science and Technology, Government of India, Reference No. SR/FST/MS--I/2018/22(C). No funding was received for conducting this study.

%\section*{Statements and Declarations}
%\textbf{Acknowledgment.} The second author gratefully acknowledges the Post Doctoral Fellowship under the ISIRD project 9--551/2023/IITRPR/10229 IIT Ropar. This work was partially supported by the FIST program of the Department of Science and Technology, Government of India, Reference No. SR/FST/MS--I/2018/22(C). 

%\textbf{Funding.} This research did not receive any specific grant from funding agencies in the public, commercial, or not-for-profit sectors.

% \textbf{Competing Interests.} The authors have no relevant financial or non-financial interests to disclose.

% \textbf{Author Contributions.} All authors contributed equally.

% \textbf{Availability of data and material.} Not applicable.

\bibliographystyle{amsplain}

\end{document}